\documentclass[12pt]{article}
\usepackage{amsmath,amsthm,amssymb}
\usepackage{xcolor}
\usepackage{hyperref}
\usepackage{ytableau}
\newcommand{\graycell}[1]{*(black!6) {#1}}
\makeatletter

\newif\iffirstvrule@YT

\def\vrule@left@YT{%
  \iffirstvrule@YT
    \global\firstvrule@YTfalse
    \vrule@normal@YT
  \else
    \vrule@none@YT
  \fi
}

\newcommand{\leftnone@YT}[1][]{%
  \def\thisboxcolor@YT{clear}%
  \let\hrule@YT=\hrule@none@YT
  \global\firstvrule@YTtrue
  \let\vrule@YT=\vrule@left@YT
  \startbox@@YT#1\endbox@YT
  \nullfont
}
\def\leftnone@YTbrace#1{\leftnone@YT[#1]}
\def\leftnone{\omit\@ifnextchar[{\leftnone@YT}{\@ifnextchar\bgroup{\leftnone@YTbrace}{\leftnone@YT}}}

\newcommand{\emptybox@YT}[1][]{%
  \def\thisboxcolor@YT{clear}%
  \let\hrule@YT=\hrule@none@YT
  \let\vrule@YT=\vrule@none@YT
  \startbox@@YT#1\endbox@YT
  \nullfont
}
\def\emptybox@YTbrace#1{\emptybox@YT[#1]}
\def\emptybox{\omit\@ifnextchar[{\emptybox@YT}{\@ifnextchar\bgroup{\emptybox@YTbrace}{\emptybox@YT}}}
\makeatother

\hypersetup{
    colorlinks=true,
    linkcolor=purple,
    citecolor=magenta,
    filecolor=cyan,
    urlcolor=cyan,
    pdftitle={Degree-one Pieri rule for nonsymmetric Jack polynomials}
}
\usepackage{aliascnt}
\newtheorem{theorem}{Theorem}[section]
\newaliascnt{lemma}{theorem}
\newtheorem{lemma}[lemma]{Lemma}
\aliascntresetthe{lemma}
\newaliascnt{corollary}{theorem}

\aliascntresetthe{corollary}
\newaliascnt{proposition}{theorem}
\newtheorem{proposition}[proposition]{Proposition}
\aliascntresetthe{proposition}
\newaliascnt{example}{theorem}
\newtheorem{example}[example]{Example}
\aliascntresetthe{example}
\usepackage{cleveref}
\newcommand{\highlight}[1]{\emph{#1}}
\begin{document}

\title{%
Degree-one Pieri rule for nonsymmetric Jack polynomials
}

\author{Waldeck Sch\"{u}tzer}
\date{}
 
\maketitle
\begin{abstract}
We give a combinatorial degree-one Pieri rule for integral nonsymmetric Jack
polynomials.  Its coefficients are products obtained from a \textit{jeu de
fl\`eches} on the source and target diagrams.  The proof uses reflection and
raising recursions, a Cherednik-operator commutator, and hook cancellation.

\noindent\textbf{2020 Mathematics Subject Classification.} Primary 05E05; Secondary 33C52.

\noindent\textbf{Keywords.} Nonsymmetric Jack polynomials, Pieri rule,
algebraic combinatorics, Cherednik operators.
\end{abstract}


\section{Introduction}

Throughout, $\mathbb N=\{0,1,2,\ldots\}$.

Let $F_\eta$ be the integral nonsymmetric Jack polynomial indexed by the
composition $\eta=(\eta_1,\ldots,\eta_n)$ \cite{MR98k:33040}, and let
$F_\eta^*$ be its dual under Sahi's scalar product \cite{MR98g:05154}.  For
the $k$-th standard basis vector $\varepsilon_k$, write
\begin{equation}
 F_{\varepsilon_k}F_\eta
 =\sum_\lambda g_{k\eta}^\lambda F_\lambda^*.
 \label{eqn-intro-pieri-expansion}
\end{equation}

Following Knop \cite{MR98m:05204}, for
$L=\{j_1<\cdots<j_\ell\}\subseteq\{1,\ldots,n\}$, define
$\lambda=c_L(\eta)$\label{def:operator:c_L} by
\[
 \lambda_{j_p}=\eta_{j_{p+1}}\ (p<\ell),\qquad
 \lambda_{j_\ell}=\eta_{j_1}+1,\qquad
 \lambda_i=\eta_i\ (i\notin L).
\]
The subset $L$ is \emph{maximal with respect to $\eta$}\label{def:maximal:wrt:eta} if
\[
\begin{array}{ll}
\text{(M1)}&\eta_i\ne\eta_{j_1}\quad(i<j_1),\\
\text{(M2)}&\eta_i\ne\eta_{j_p}\quad(j_{p-1}<i<j_p,\ 1<p\le\ell),\\
\text{(M3)}&\eta_i\ne\eta_{j_1}+1\quad(j_\ell<i\le n).
\end{array}
\]
Indices in $L$ will be called \highlight{selected}; all other indices are
\highlight{unselected}.

By homogeneity, $g_{k\eta}^\lambda=0$ unless $|\lambda|=|\eta|+1$.
The nonzero coefficients are characterized as follows.

\begin{theorem}\label{thm-non-van-min}
Let $\eta,\lambda\in\mathbb N^n$, $1\le k\le n$, and
$|\lambda|=|\eta|+1$.  The coefficient $g_{k\eta}^\lambda$ is nonzero if and only if:
\begin{enumerate}
\item[(a)] $\lambda=c_S(\eta)$, where
$S=\{i_1<\cdots<i_s\}=\{i:\eta_i\ne\lambda_i\}$;
\item[(b)] either $i_s\ge k$ or $\eta_i=\eta_{i_1}+1$ for some $i\ge k$;
\item[(c)] if $\eta_1=\cdots=\eta_k=\eta_{i_1}$, then $i_1\le k$.
\end{enumerate}
\end{theorem}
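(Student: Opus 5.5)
The strategy is to reduce $F_{\varepsilon_k}F_\eta$ to the known Pieri rule for multiplication by $x_k$, and then read off which terms survive. Since $F_{\varepsilon_1}$ is a multiple of $x_1$, and the $F_{\varepsilon_k}$ are produced from it by the reflection recursion ($F_{s_i\mu}$ is expressed through $s_iF_\mu$ and $F_\mu$ when $\mu_i>\mu_{i+1}$), we can write $F_{\varepsilon_k}=x_k+\sum_{j<k}c_j x_j$ with all $c_j\neq 0$. Each coefficient $c_j$ is a nonvanishing rational function of $\alpha$ coming from the spectral data of Cherednik operators.

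Next, apply Knop's expansion of $x_jF_\eta$ into nonsymmetric Jack polynomials. The only terms that appear are indexed by $c_L(\eta)$ with $L$ maximal with respect to $\eta$, and the term for $L=\{j_1<\cdots<j_\ell\}$ occurs in $x_jF_\eta$ exactly when $j$ lies in an appropriate window determined by $L$. One obtains this window by iterating the raising operator $\Phi$ and the intertwiners $s_i$: writing $x_j$ via the commutator $[\xi_j,x_j]$ of the Cherednik operator with $x_j$, one moves $j$ past the selected indices.

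Since $F^*_\lambda$ is proportional to $F_\lambda$ under Sahi's pairing, $g^\lambda_{k\eta}$ is a nonzero multiple of
\[
\sum_{j\le k,\ j\text{ admissible for }S}c_j\,a_j(\eta,S),
\]
where $S=\{i:\eta_i\neq\lambda_i\}$ is the unique candidate set. This gives condition (a) at once: $S$ must be the difference set, $\lambda$ must equal $c_S(\eta)$, and maximality must hold. I expect to check separately that (M1)–(M3) are automatic once $\lambda=c_S(\eta)$ with this particular $S$. Equality cases of (M2) and (M3) would otherwise force $\eta_i=\lambda_i$ at some $i\notin S$, and that needs a short argument.

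The remaining work is to identify when the sum over $j\le k$ can be nonzero, and this yields (b) and (c). Condition (b) reflects the upper support. The terms $x_j$ with $j>i_s$ contribute to $c_S(\eta)$ only through the trailing index in (M3), namely an entry equal to $\eta_{i_1}+1$ at a position $\ge k$. So if $i_s<k$ and no such entry exists at or beyond $k$, every admissible $j$ exceeds $k$ and the coefficient is zero. Condition (c) reflects the symmetry: if $\eta_1=\cdots=\eta_k$, then $F_\eta$ is invariant under $s_1,\dots,s_{k-1}$ in a suitable sense, and $F_{\varepsilon_k}$ times it lands in components where the leading selected index $i_1$ must be $\le k$.

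The main obstacle is the converse direction: showing that when (a)–(c) hold, the several contributions $c_ja_j$ do not cancel. The plan there is to compute each $a_j$ explicitly as a product of hook-type factors $(\alpha a+l+1)/(\alpha a+l)$ over the boxes moved, and then exhibit a telescoping sum. Combining the $c_j$ with these hook ratios should collapse the sum to a single product of linear factors in $\alpha$. One then verifies that no factor vanishes for generic $\alpha$ exactly under (b) and (c), and that a factor vanishes precisely when (b) or (c) fails. This hook cancellation is the step I expect to be delicate.
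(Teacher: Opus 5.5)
Your expansion of $F_{\varepsilon_k}$ runs the wrong way. In the Knop--Sahi normalization used here, applying $E_{\Phi\gamma}=\Phi E_\gamma$ at $\gamma=0$ gives $F_{\varepsilon_n}=(\alpha+n)x_n$. In general $F_{\varepsilon_k}=(\alpha+k)x_k+\sum_{i>k}x_i$. So it is $F_{\varepsilon_n}$, not $F_{\varepsilon_1}$, that is a multiple of one variable, and the lower terms are the $x_i$ with $i>k$.

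This changes the answer. Take $n=2$, $\eta=(0,0)$, $\lambda=(0,1)$, $k=1$. Then $F_{\varepsilon_1}F_\eta=(\alpha+1)x_1+x_2=F_{(1,0)}$, so $g_{1\eta}^\lambda=0$, as (c) predicts. But $x_1=\frac{1}{\alpha+1}F_{(1,0)}-\frac{1}{(\alpha+1)(\alpha+2)}F_{(0,1)}$ has a nonzero $F_{(0,1)}$-component, so your version would predict nonvanishing.

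The same example refutes your claim that (M1)--(M3) are automatic for the difference set: $S=\{2\}$ violates (M1). The maximal set producing $\lambda$ is the completion $L\supseteq S$ of \Cref{lem-r19-maximal-completion} (here $L=\{1,2\}$). The variables reaching the target are exactly the $x_i$ with $i\in L$ (\Cref{lem-r19-coordinate-expansion}), not some ``window''. The adjoined fixed indices produce both the second alternative in (b), which really says $j_\ell\ge k$, and the exceptional case (c). With your expansion you would be testing $j_1\le k$ instead, and your sentence ``every admissible $j$ exceeds $k$'' contradicts your own setup.

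For the converse, you plan to telescope the contributions into one product of linear factors and read off vanishing factor by factor. This cannot work, because the coefficient is generally not such a product. For $\eta=(0,2,1)$, $\lambda=(2,1,1)$ (so $L=\{1,2,3\}$) and $k=2$, the rule gives $g_{2\eta}^\lambda=(\alpha^2+3\alpha+3)\,b_{\eta\lambda}$, which has an irreducible quadratic factor.

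The missing idea is the commutator $[\xi_i,e_1]=\alpha x_i$, not $[\xi_j,x_j]$. Comparing $F_\lambda$-coefficients in $\xi_i(e_1F_\eta)$ gives $\alpha[x_iF_\eta]_\lambda=-q_i[e_1F_\eta]_\lambda$, with $q_i=\overline\eta_i-\overline\lambda_i$. So every contribution is a multiple of the single quantity $[e_1F_\eta]_\lambda$, which is nonzero because some $[x_iF_\eta]_\lambda$ is. Hence $g_{k\eta}^\lambda\ne0$ if and only if $Q_k=(\alpha+k)q_k+\sum_{i>k}q_i\ne0$, and no hook-by-hook evaluation of the individual terms is needed.

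The telescoping identity $\sum_{r\ge p}q_{j_r}=\overline\eta_{j_p}-\overline\eta_{j_1}-\alpha$ then reduces $Q_k$ case by case:
\begin{itemize}
\item $Q_k=0$ for $k>j_\ell$;
\item $Q_k=\overline\eta_{j_p}-\overline\eta_{j_1}-\alpha$ for $k\notin L$, $k<j_\ell$;
\item $Q_k=(\alpha+k)q_{j_\ell}$ for $k=j_\ell$;
\item otherwise $Q_k$ is a quadratic in $\alpha$ whose $\alpha^2$-coefficient vanishes only if $\eta_{j_p}=\eta_{j_{p+1}}$; in that case $q_k=1$ and $Q_k$ becomes a row-rank count.
\end{itemize}
Condition (c) is exactly the cancellation $q_1=\cdots=q_k=1$, $\sum_{i>k}q_i=-\alpha-k$, which gives $Q_k=0$. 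Symmetry of $F_\eta$ in $x_1,\ldots,x_k$ only yields $[x_1F_\eta]_\lambda=\cdots=[x_kF_\eta]_\lambda$; the vanishing still requires the spectral relation above.
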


For a box $s=(i,j)$ of the diagram of a composition $\gamma$, define its
\highlight{arm} and \highlight{leg} by
\[
 a_\gamma(s)=\gamma_i-j,\qquad
 l_\gamma(s)=\#\{h<i:j\le\gamma_h+1\le\gamma_i\}
 +\#\{h>i:j\le\gamma_h\le\gamma_i\}.
\]
Following Sahi \cite{MR98g:05154}, define its \highlight{upper} and
\highlight{lower hooks} by
$d_\gamma(s)=\alpha(a_\gamma(s)+1)+l_\gamma(s)+1$ and
$d'_\gamma(s)=d_\gamma(s)-1$.

For a maximal $L=\{j_1<\cdots<j_\ell\}$, put $\lambda=c_L(\eta)$.
The \emph{jeu de fl\`eches} uses the diagrams of $\eta$ and $\lambda$, called
the source and target.  In a selected row place upper hooks in the source and
lower hooks in the target; in an unselected row reverse these choices.

Draw an arrow after each selected row, upward in the source and downward in the
target, with the first source arrow pointing northeast and the last target
arrow southwest.  Launch each arrow toward the next selected row.  A hit in a
selected source row replaces the entry by $1$; a hit in a selected target row
replaces it by $-1$.  Other arrows have no effect.  Let $b_{\eta\lambda}$ be
the product of the resulting entries.

If row $k$ is shielded, suppress any replacement in that row and denote the
product by $b_{\eta\lambda}^{(k)}$.  A shield outside $L$ has no effect.
\begin{theorem}
\label{thm-r19-pieri}
Let $L$ be maximal with respect to $\eta$, put $\lambda=c_L(\eta)$, and
let $1\le k\le n$.  Then
\begin{equation}
 g_{k\eta}^\lambda=
 \mathbf{1}_{\{k\in L\}}(\alpha+k)b_{\eta\lambda}^{(k)}
 +\sum_{\substack{i\in L\\i>k}}b_{\eta\lambda}^{(i)}.
 \label{eqn-r19-uniform-coefficient}
\end{equation}
As $L$ ranges over the maximal subsets with respect to $\eta$,
\eqref{eqn-r19-uniform-coefficient} gives all coefficients whose target is
$c_L(\eta)$; coefficients with any other target are zero.
\end{theorem}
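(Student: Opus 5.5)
The plan is to prove the formula by induction, driven by the two recursions named in the abstract. The reflection recursion uses the Demazure--Lusztig (Hecke) operators $s_i$. The raising recursion uses Knop--Sahi's affine operator $\Phi$, with $F_{\Phi\eta}=x_n F_\eta(x_n,x_1,\ldots,x_{n-1})$.

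The first step is to reduce all coefficients to the case $k=1$, or equivalently to the case $k=n$. Write $F_{\varepsilon_k}=x_k$ up to normalisation, noting that $F_{\varepsilon_k}$ is expressed via $F_{\varepsilon_n}$ and the Cherednik operator $\xi_k$. The commutator $[\xi_k,x_k]$ then lets us write
\[
F_{\varepsilon_k}F_\eta=\Bigl((\alpha+k)\,x_k+\sum_{i>k}(\text{commutator terms involving }x_i)\Bigr)F_\eta .
\]
Applying both sides to $F_\eta$ and expanding $x_iF_\eta$ in the dual basis should produce exactly the shape of \eqref{eqn-r19-uniform-coefficient}. There is one term $(\alpha+k)$ times a "local" coefficient for row $k$, plus a sum over $i>k$. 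Each local coefficient $b^{(i)}_{\eta\lambda}$ is the contribution of multiplication by $x_i$ restricted to target $c_L(\eta)$, with $i\in L$. This step identifies $b^{(i)}_{\eta\lambda}$ with the coefficient of $F^*_{c_L(\eta)}$ in $x_iF_\eta$, suitably normalised. Theorem~\ref{thm-non-van-min} guarantees that only maximal $L$ contribute, which gives the final sentence of the statement.

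The second step is to compute the coefficient of $F^*_{c_L(\eta)}$ in $x_iF_\eta$ in closed form. I would induct on $|\eta|$ and on the number of inversions. If $\eta_j<\eta_{j+1}$, then $F_{s_j\eta}$ is obtained from $F_\eta$ by $s_j$ plus a scalar multiple of $F_\eta$. Conjugating $x_i$ by $s_j$ changes $x_i$ to $x_{s_j(i)}$, up to a correction term. Tracking how $L$ and $c_L$ transform under $s_j$ gives a recursion relating the hook products for $s_j\eta$ to those for $\eta$; only the boxes in rows $j,j+1$ change hooks. For the raising operator $\Phi$, rows rotate cyclically and the first row gains a box. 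This matches the structure of $c_L$: the last selected row receives $\eta_{j_1}+1$. The base case is $\eta=0$, where $F_0=1$ and the coefficients are explicit.

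The third step, and the main obstacle, is the hook cancellation. One must check that the ratio of normalisations $\prod d_\lambda / \prod d'_\eta$ appearing in the dual basis collapses to the jeu de flèches product. Concretely, most upper hooks of $\eta$ and lower hooks of $\lambda$ in the same column cancel pairwise, since arm and leg agree for unaffected boxes. The surviving factors should be exactly those in the selected rows, up to the boxes hit by the arrows. At the arrow positions the leg changes by one, turning $d$ into $d'$, and the quotient becomes $\pm1$. I would first verify this for $|L|=1$ and $|L|=2$ by direct leg computation. I would then handle general $L$ by showing that each arrow from selected row $j_p$ to $j_{p+1}$ affects exactly one box in the source or target. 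This uses conditions (M1)--(M3) to rule out ties. The shield at row $k$ corresponds to the commutator term where $x_k$ acts directly, so no leg shift occurs in that row.
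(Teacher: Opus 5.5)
\textbf{Gap: the core identity is not established.} Your outer reduction matches the paper. Since $F_{\varepsilon_k}=(\alpha+k)x_k+\sum_{i>k}x_i$ (it is not a multiple of $x_k$, and no passage through $k=1$ or $k=n$ is needed), the theorem reduces to showing two things: the $F^*_\lambda$-coefficient of $x_iF_\eta$ equals $b^{(i)}_{\eta\lambda}$ for $i\in L$, and it vanishes for $i\notin L$. That identity is the whole content, and your proposal gives no mechanism for it. In particular, nothing in it produces the shield; ``the commutator term where $x_k$ acts directly'' is a heuristic, not an argument.

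The missing idea is the commutator $[\xi_i,e_1]=\alpha x_i$. It yields $\alpha[x_iF_\eta]_\lambda=(\overline\lambda_i-\overline\eta_i)[e_1F_\eta]_\lambda$, so all coordinates are proportional to one quantity, with ratio $q_i=\overline\eta_i-\overline\lambda_i$. A rank-displacement count then shows that $q_i$ is exactly the entry struck in row $i$: $d_\eta(i,\lambda_i+1)$, $1$, or $-d'_\lambda(i,\eta_i+1)$. Hence $b^{(i)}_{\eta\lambda}=q_ib_{\eta\lambda}$, and $q_i=0$ off $L$ gives the vanishing. After that, only the single coordinate $x_{j_\ell}$ has to be computed.

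\textbf{Your second and third steps would not go through as written.} Under $s_j$, $x_iF_{s_j\eta}$ is a combination of $s_jx_{s_j(i)}F_\eta$ and $x_iF_\eta$; for the plain transposition used here, $s_jx_is_j=x_{s_j(i)}$ holds with no correction term. Each $s_jF_\mu$ then splits into $F_\mu$ and $F_{s_j\mu}$, so a given target collects several contributions. You would have to prove that their sum collapses to a single jeu de fl\`eches product, while also tracking how maximal sets move under $s_j$ and $\Phi$. None of this is addressed. Note also that $F_{\Phi\eta}=\rho(\eta)\Phi F_\eta$, not $\Phi F_\eta$, and $\rho$ is exactly the upper hook of the new box. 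The paper avoids sums entirely: writing $x_i=s_i\cdots s_{n-1}\Phi s_1\cdots s_{i-1}$, each maximal $K\ni i$ indexes exactly one nonzero branch.

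Your hook-cancellation picture is also wrong in its specifics:
\begin{itemize}
\item The dual normalisation is $d_\eta d'_\lambda$, since $F^*_\lambda=E_\lambda/d'_\lambda$, not the ratio you wrote.
\item $b_{\eta\lambda}$ contains every box of both diagrams, unselected rows included, so it is not ``what survives in the selected rows''.
\item The $\pm1$ entries do not come from a leg shift turning $d$ into $d'$; that ratio is not $\pm1$. They come from dividing out the $q_j$: on the $x_{j_\ell}$ branch the fixed coefficients are exactly $q_{j_p}^{-1}$ for $p<\ell$, and row $j_\ell$ is the shielded one.
\end{itemize}
The paper's telescoping uses mixed row products, with upper hooks on rows carrying selected labels and lower hooks elsewhere. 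Each interchange coefficient $(\Delta-1)/\Delta$ is the ratio of these products before and after the swap, because only one box of the longer row changes its hook.
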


\begin{example}
For $\eta=(0,1,3,1,2)$, $\lambda=(1,3,2,1,1)$, and
$L=\{1,2,3,5\}$, \Cref{fig:jeufl:before,fig:jeufl:after:noshield}
show the initial and unshielded games.  The source arrows replace the
upper hooks $\alpha+1$ and $\alpha+3$ in rows $3$ and $5$ by $1$; the
target arrows replace the lower hooks $\alpha+2$ and $2\alpha+2$ in rows
$1$ and $2$ by $-1$.

Reset the game and shield row $1$.  The target entry $\alpha+2$ is no
longer replaced by $-1$, while the other three changes remain.  Therefore
\[
 b_{\eta\lambda}^{(1)}=-(\alpha+2)b_{\eta\lambda},
\]
as shown in \Cref{fig:jeufl:after:shield}.
\end{example}
\begin{figure}[htb]
\begin{center}
\ytableausetup{
  boxsize=2.3em,
  nosmalltableaux
}
\begin{ytableau}
\leftnone[\nearrow] \\
\graycell {\scriptstyle \alpha+3} & \emptybox[\uparrow]\\
\graycell {\scriptstyle 3\alpha+5}
  & \graycell {\scriptstyle 2\alpha+3}
  & \graycell {\scriptstyle \alpha+1} & \emptybox[\uparrow]\\
{\scriptstyle \alpha+1}  \\
\graycell {\scriptstyle 2\alpha+4}
  & \graycell {\scriptstyle \alpha+3} & \emptybox[\uparrow]
\end{ytableau} \hspace{4em}
\begin{ytableau}
\graycell {\scriptstyle \alpha+2} & \emptybox[\downarrow] \\
\graycell {\scriptstyle 3\alpha+4} & \graycell {\scriptstyle 2\alpha+2} &
\graycell {\scriptstyle \alpha} & \emptybox[\downarrow]\\
\graycell {\scriptstyle 2\alpha+3}
  & \graycell {\scriptstyle \alpha+1}
  & \emptybox {\downarrow} \\
{\scriptstyle \alpha+2} \\
\graycell {\scriptstyle \alpha}
  & \emptybox[\swarrow]
\end{ytableau}
\end{center}
\caption{The jeu de flèches for $\eta=(0,1,3,1,2)$,
$\lambda=(1,3,2,1,1)$, and $L=\{1,2,3,5\}$, before the exceptional
arrows are launched.  The source is on the left, the target on the right,
and selected rows are shaded.}\label{fig:jeufl:before}
\end{figure}
\begin{figure}[htb]
\begin{center}
\ytableausetup{
  boxsize=2.3em,
  nosmalltableaux
}
\begin{ytableau}
\leftnone[\nearrow] \\
\graycell {\scriptstyle \alpha+3} & \emptybox[\uparrow]\\
\graycell {\scriptstyle 3\alpha+5}
  & \graycell {\scriptstyle 2\alpha+3}
  & \graycell {\scriptstyle 1} & \emptybox[\uparrow]\\
{\scriptstyle \alpha+1} \\
\graycell {\scriptstyle 2\alpha+4}
  & \graycell {\scriptstyle 1} & \emptybox[\uparrow]
\end{ytableau} \hspace{4em}
\begin{ytableau}
\graycell {\scriptstyle -1} & \emptybox[\downarrow] \\
\graycell {\scriptstyle 3\alpha+4} & \graycell {\scriptstyle -1} &
\graycell {\scriptstyle \alpha} & \emptybox[\downarrow]\\
\graycell {\scriptstyle 2\alpha+3}
  & \graycell {\scriptstyle \alpha+1}
  & \emptybox {\downarrow} \\
{\scriptstyle \alpha+2} \\
\graycell {\scriptstyle \alpha}
  & \emptybox[\swarrow]
\end{ytableau}
\[
b_{\eta\lambda}=
{\alpha}^{2} \left( \alpha+1 \right)^{2} \left( \alpha+2
 \right) \left( \alpha+3 \right) \left( 2\,
\alpha+3 \right) ^{2} \left( 2\,\alpha+4 \right) \left( 3\,\alpha+4 \right) \left( 3\,\alpha+5 \right).
\]
\end{center}
\caption{The same jeu de flèches after the arrows are launched.}\label{fig:jeufl:after:noshield}
\end{figure}

\begin{figure}[htb]
\begin{center}
\ytableausetup{
  boxsize=2.3em,
  nosmalltableaux
}
\begin{ytableau}
\leftnone[\nearrow] & \emptybox[\text{Shield}]\\
\graycell {\scriptstyle \alpha+3} & \emptybox[\uparrow]\\
\graycell {\scriptstyle 3\alpha+5}
  & \graycell {\scriptstyle 2\alpha+3}
  & \graycell {\scriptstyle 1} & \emptybox[\uparrow]\\
{\scriptstyle \alpha+1} \\
\graycell {\scriptstyle 2\alpha+4}
  & \graycell {\scriptstyle 1} & \emptybox[\uparrow]
\end{ytableau} \hspace{4em}
\begin{ytableau}
\graycell {\scriptstyle \alpha+2} & \emptybox[\downarrow] & \emptybox[\text{Shield}]\\
\graycell {\scriptstyle 3\alpha+4} & \graycell {\scriptstyle -1} &
\graycell {\scriptstyle \alpha} & \emptybox[\downarrow]\\
\graycell {\scriptstyle 2\alpha+3}
  & \graycell {\scriptstyle \alpha+1}
  & \emptybox {\downarrow} \\
{\scriptstyle \alpha+2} \\
\graycell {\scriptstyle \alpha}
  & \emptybox[\swarrow]
\end{ytableau}
\[
b_{\eta\lambda}^{(1)}=
-{\alpha}^{2} \left( \alpha+1 \right)^{2} \left( \alpha+2
 \right)^2 \left( \alpha+3 \right) \left( 2\,
\alpha+3 \right) ^{2} \left( 2\,\alpha+4 \right) \left( 3\,\alpha+4 \right) \left( 3\,\alpha+5 \right).
\]
\end{center}
\caption{The same jeu de flèches with shield on row 1 after the arrows are launched.}\label{fig:jeufl:after:shield}
\end{figure}

Marshall \cite{MR2004a:33020} gave an explicit formula for multiplication
by one variable, and Forrester--McAnally \cite{qa/0006006} obtained related
Pieri-type formulas.  The combinatorial degree-one rule first appeared in the author's 2004
dissertation \cite{MR2706078}; to the author's knowledge, it was the first
fully combinatorial Pieri formula for nonsymmetric Jack polynomials.  That proof used Marshall's formula and a static filling.  The
jeu de fl\`eches is introduced here, and the proof derives the rule directly
from reflection and raising recursions, the Cherednik commutator, and hook
cancellation.

\section{Preliminaries}
\label{sec-preliminaries}

\subsection{Jack polynomials and duality}

Let $E_\eta=x^\eta+\sum_{\nu<\eta}a_{\nu\eta}x^\nu$ be the monic
nonsymmetric Jack polynomial, with the usual dominance--Bruhat order on
compositions \cite[Section~2]{MR98k:33040}.  Put
\begin{equation}
 \overline\eta_i=\alpha\eta_i-l'_\eta(i),\qquad
 l'_\eta(i)=\#\{h<i:\eta_h\ge\eta_i\}
            +\#\{h>i:\eta_h>\eta_i\}.
 \label{eqn-r19-eigenvalues}
\end{equation}
The Cherednik operators $\xi_1,\ldots,\xi_n$ have $E_\eta$ as a simultaneous
eigenfunction with eigenvalues $\overline\eta_i$
\cite[\S 3 and Corollary~3.2]{MR98k:33040}.  Set
\[
 \operatorname{rk}_\eta(i)=l'_\eta(i)+1.
\]
This is the rank of row $i$ in decreasing order of length, with ties broken
from top to bottom; hence
\begin{equation}
 \overline\eta_i=\alpha\eta_i-\operatorname{rk}_\eta(i)+1.
 \label{eqn-r19-eigenvalue-rank}
\end{equation}
The eigenvalues are pairwise distinct as polynomials in $\alpha$
\label{rem:distinct:eigenvals}.

With the hooks defined above, set
$d_\eta=\prod_{s\in\eta}d_\eta(s)$,
$d'_\eta=\prod_{s\in\eta}d'_\eta(s)$, and $F_\eta=d_\eta E_\eta$.
Each hook polynomial is nonzero, so $d_\eta d'_\eta$ is a unit in
$\mathbb Q(\alpha)$.

For Sahi's scalar product \cite[Theorems~1.1 and~1.2]{MR98g:05154}, the $F_\eta$ are orthogonal and
$\langle F_\eta,F_\eta\rangle=d_\eta d'_\eta$,
hence the dual to $F_\eta$ is
\begin{equation}
 F_\eta^*=(d_\eta d'_\eta)^{-1}F_\eta.
 \label{eqn-r19-dual}
\end{equation}

\subsection{Maximal subsets}

\begin{lemma}\label{lem-r19-maximal-completion}
Let $\eta\in\mathbb N^n$ and
\[
 S=\{i_1<\cdots<i_s\}\ne\emptyset,
 \qquad  \lambda=c_S(\eta).
\]
Adjoin to $S$ all indices in
\begin{equation}
\begin{split}
 &\{i<i_1:\eta_i=\eta_{i_1}\},\\
 &\{i_{t-1}<i<i_t:\eta_i=\eta_{i_t}\}\qquad(2\le t\le s),\\
 &\{i>i_s:\eta_i=\eta_{i_1}+1\}.
\end{split}
\label{eqn-r19-maximal-completion}
\end{equation}
The resulting set $L$ is maximal with respect to $\eta$ and
$c_L(\eta)=\lambda$.

Conversely, if $L$ is maximal, $\lambda=c_L(\eta)$, and
$S=\{i:\eta_i\ne\lambda_i\}$, then $\lambda=c_S(\eta)$ and $L$ is
precisely the completion \eqref{eqn-r19-maximal-completion}.  In
particular, it is the unique maximal subset producing $\lambda$.
If $L=\{j_1<\cdots<j_\ell\}$, then, for every $k$,
\begin{equation}
 j_\ell\ge k
 \quad\Longleftrightarrow\quad
 i_s\ge k\ \hbox{ or }\
 \eta_i=\eta_{i_1}+1\ \hbox{ for some }i\ge k.
\label{eqn-r19-completion-tail}
\end{equation}
\end{lemma}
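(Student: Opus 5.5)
The plan is to verify each claim directly from the definition of $c_L$, by tracking which values move where under the cyclic shift. First I will describe $c_L(\eta)$ for a general $L=\{j_1<\cdots<j_\ell\}$ as follows. Row $j_p$ receives the old value of the next selected row $j_{p+1}$, and row $j_\ell$ receives $\eta_{j_1}+1$. Inserting an index $i$ into $L$ strictly between two consecutive elements $j_{p-1}<i<j_p$ with $\eta_i=\eta_{j_p}$ does not change the image. Row $j_{p-1}$ now receives $\eta_i=\eta_{j_p}$ instead of $\eta_{j_p}$, and row $i$ receives $\eta_{j_p}=\eta_i$, so it is fixed. Similarly, inserting $i<j_1$ with $\eta_i=\eta_{j_1}$ makes $i$ the new first element. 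Then $j_\ell$ receives $\eta_i+1=\eta_{j_1}+1$, and $i$ receives $\eta_{j_1}=\eta_i$. Inserting $i>j_\ell$ with $\eta_i=\eta_{j_1}+1$ makes $j_\ell$ receive $\eta_i=\eta_{j_1}+1$, and row $i$ receives $\eta_{j_1}+1=\eta_i$. Adjoining all indices in \eqref{eqn-r19-maximal-completion} at once is handled the same way. Each interval $(i_{t-1},i_t)$ becomes a chain of equal values $\eta_{i_t}$, the initial segment becomes a chain of value $\eta_{i_1}$, and the tail becomes a chain of value $\eta_{i_1}+1$. Along each chain the shift is the identity, so $c_L(\eta)=c_S(\eta)=\lambda$.

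Next I check maximality of $L$. By construction, every $i$ before the first element of $L$ has $\eta_i\ne\eta_{i_1}$, which is the new $\eta_{j_1}$; this gives (M1). For (M2), take a gap between consecutive elements of $L$. If the gap lies inside $(i_{t-1},i_t)$, its right endpoint has value $\eta_{i_t}$, and every index in the gap with that value was adjoined. If the gap lies before $i_1$, the right endpoint has value $\eta_{i_1}$ and the same argument applies. For (M3), any index after the last element of $L$ with value $\eta_{j_1}+1=\eta_{i_1}+1$ would have been adjoined from the tail set. The main subtlety is that the first element $j_1$ may differ from $i_1$, but it has the same value, so the conditions read the same.

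For the converse, let $L$ be maximal and $\lambda=c_L(\eta)$. I classify the indices of $L$ by whether they are fixed. An index $j_p$ with $p<\ell$ is fixed iff $\eta_{j_p}=\eta_{j_{p+1}}$. The last index $j_\ell$ is never fixed would require $\eta_{j_\ell}=\eta_{j_1}+1$; so the fixed indices form runs of equal values. I then show that deleting all fixed indices, i.e.\ passing to $S$, is exactly the inverse of the completion. Removing fixed elements reverses the insertions in the first paragraph, so $c_S(\eta)=\lambda$. Maximality of $L$ then forces $L$ to contain every index listed in \eqref{eqn-r19-maximal-completion} relative to $S$. For example, if $i_{t-1}<i<i_t$ with $\eta_i=\eta_{i_t}$ and $i\notin L$, then $i$ lies in some gap of $L$ whose right endpoint is either $i_t$ or a fixed index of value $\eta_{i_t}$, contradicting (M2). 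Conversely, every element of $L\setminus S$ is fixed and hence lies in one of the listed sets. This is the step I expect to be fiddliest: it needs care when $S$ is small or the runs wrap around at the start and tail. Uniqueness follows, since $S$ is determined by $\lambda$ and $L$ by $S$.

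Finally, for \eqref{eqn-r19-completion-tail}, note that $j_\ell$ is the largest element of $L$. That element is either $i_s$, or the largest adjoined tail index $i>i_s$ with $\eta_i=\eta_{i_1}+1$. Hence $j_\ell\ge k$ iff $i_s\ge k$ or such a tail index $\ge k$ exists. An index $i\ge k$ with $\eta_i=\eta_{i_1}+1$ but $i\le i_s$ already gives $i_s\ge k$, so the right-hand condition may be stated without the restriction $i>i_s$.
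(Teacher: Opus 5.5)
Your proposal is correct and follows essentially the same route as the paper: adjoined indices are fixed points of the cyclic assignment (so $c_L(\eta)=c_S(\eta)$), maximality is read off from the construction, the converse deletes the fixed selected positions, and the tail equivalence comes from identifying $j_\ell$ as either $i_s$ or the last adjoined tail index. Two small fixes: your (M2) check should also cover gaps lying after $i_s$, whose right endpoints are tail indices of value $\eta_{i_1}+1$ (the same argument works), and the garbled sentence about $j_\ell$ should say that $j_\ell$ is fixed if and only if $\eta_{j_\ell}=\eta_{j_1}+1$, which does occur, e.g.\ for $\eta=(0,1)$ and $L=\{1,2\}$.
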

\begin{proof}
Let $r$ be an index adjoined in
\eqref{eqn-r19-maximal-completion}.  If $r$ is not the last selected index,
the next selected index $r^+$ satisfies $\eta_r=\eta_{r^+}$; if $r$ is
last, then $\eta_r=\eta_{i_1}+1$.  In either case
$(c_L(\eta))_r=\eta_r$, so every adjoined position is fixed.  The values
assigned at the original positions of $S$ are therefore the same for
$c_L(\eta)$ and $c_S(\eta)$, whence
$c_L(\eta)=c_S(\eta)=\lambda$.  Since every index with the relevant
interval value has been inserted, the remaining unselected indices satisfy
(M1)--(M3), so $L$ is maximal.

Conversely, let $L$ be maximal and $\lambda=c_L(\eta)$.  Delete each
selected position with $\lambda_i=\eta_i$.  Deleting such a fixed position
does not alter the cyclic assignment, so the surviving set is exactly
$S=\{i:\eta_i\ne\lambda_i\}$ and still gives $\lambda=c_S(\eta)$.
Every deleted position has the interval value in
\eqref{eqn-r19-maximal-completion}, while maximality forbids an index with
that value from remaining unselected.  Thus the deleted positions are
exactly the three completion sets, proving uniqueness.

If $i_s<k$, completion reaches $k$ or farther precisely when its terminal set
contains some $i\ge k$, equivalently when $\eta_i=\eta_{i_1}+1$ for such an
$i$.  This proves \eqref{eqn-r19-completion-tail}; the case $i_s\ge k$ is
immediate.
\end{proof}

\subsection{Reflection and raising recursions}

For $1\le r<n$, let $s_r$ act on compositions by interchanging entries
$r$ and $r+1$, and on polynomials by interchanging the variables
$x_r$ and $x_{r+1}$.

\begin{lemma}\label{lem-cherednik-row-exchange}
Let $\eta\in\mathbb N^n$, $1\le r<n$, and suppose
$\eta_r\ne\eta_{r+1}$.  Then
$\overline{s_r\eta}=s_r\overline\eta$.
\end{lemma}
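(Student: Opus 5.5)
We verify the identity coordinatewise directly from the rank formula \eqref{eqn-r19-eigenvalue-rank}. Put $\mu=s_r\eta$, so that $\mu_r=\eta_{r+1}$, $\mu_{r+1}=\eta_r$, and $\mu_i=\eta_i$ otherwise. The claim $\overline{\mu}=s_r\overline\eta$ says three things:
\[
\overline\mu_r=\overline\eta_{r+1},\qquad
\overline\mu_{r+1}=\overline\eta_r,\qquad
\overline\mu_i=\overline\eta_i\ (i\ne r,r+1).
\]
Since the $\alpha$-parts already match under the swap, it suffices to show
\[
\operatorname{rk}_\mu(r)=\operatorname{rk}_\eta(r+1),\qquad
\operatorname{rk}_\mu(r+1)=\operatorname{rk}_\eta(r),\qquad
\operatorname{rk}_\mu(i)=\operatorname{rk}_\eta(i)\ (i\ne r,r+1).
\]
Equivalently, we verify the same identities with $l'$ in place of $\operatorname{rk}$.

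First take $i\notin\{r,r+1\}$. The positions $r$ and $r+1$ lie on the same side of $i$, since $i<r$ or $i>r+1$. The multiset of values $\{\eta_r,\eta_{r+1}\}$ is unchanged by the swap, and each term in $l'_\eta(i)$ depends only on the value at a position and on which side of $i$ that position lies. All other positions are untouched, so $l'_\mu(i)=l'_\eta(i)$.

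Next consider the row carrying the value $\eta_{r+1}$, which sits at position $r+1$ in $\eta$ and at position $r$ in $\mu$. Comparisons with positions $h\notin\{r,r+1\}$ are identical in the two counts, because such $h$ are on the same side of both $r$ and $r+1$ and the values there agree. The only difference is the comparison between the two swapped entries. In $\eta$ the partner value $\eta_r$ sits before position $r+1$ and is counted iff $\eta_r\ge\eta_{r+1}$. In $\mu$ the partner value $\eta_r$ sits after position $r$ and is counted iff $\eta_r>\eta_{r+1}$. Because $\eta_r\ne\eta_{r+1}$, the conditions $\ge$ and $>$ coincide, so the two counts agree.

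The row carrying the value $\eta_r$ is handled the same way. In $\eta$ the partner $\eta_{r+1}$ is after position $r$ and is counted iff $\eta_{r+1}>\eta_r$. In $\mu$ the partner $\eta_{r+1}$ is before position $r+1$ and is counted iff $\eta_{r+1}\ge\eta_r$. These again coincide under the hypothesis.

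The only delicate point is this tie-breaking asymmetry between $\ge$ and $>$ in $l'$. It is exactly where the hypothesis $\eta_r\ne\eta_{r+1}$ is needed. When $\eta_r=\eta_{r+1}$ the swap fixes $\eta$, yet the two eigenvalues differ by $1$, so the identity fails.
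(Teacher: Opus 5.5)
Your proof is correct and follows the paper's argument. Both reduce the claim, via the rank form of the eigenvalues, to $\operatorname{rk}_{s_r\eta}(r)=\operatorname{rk}_\eta(r+1)$, $\operatorname{rk}_{s_r\eta}(r+1)=\operatorname{rk}_\eta(r)$, and $\operatorname{rk}_{s_r\eta}(h)=\operatorname{rk}_\eta(h)$ for $h\ne r,r+1$; your $\ge$ versus $>$ comparison of the two swapped entries spells out what the paper compresses into ``the exchanged rows cross no row of the same length.''
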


\begin{proof}
Put $\gamma=s_r\eta$.  Since $\eta_r\ne\eta_{r+1}$, the exchanged rows cross
no row of the same length.  Hence
\[
 \operatorname{rk}_\gamma(r)=\operatorname{rk}_\eta(r+1),\qquad
 \operatorname{rk}_\gamma(r+1)=\operatorname{rk}_\eta(r),
\]
while $\operatorname{rk}_\gamma(h)=\operatorname{rk}_\eta(h)$ for
$h\ne r,r+1$.  Together with
$\gamma_r=\eta_{r+1}$ and $\gamma_{r+1}=\eta_r$, substitution in
\eqref{eqn-r19-eigenvalue-rank} gives
$\overline\gamma=s_r\overline\eta$.
\end{proof}

\begin{proposition}\label{lem-r19-F-reflection}
Let $\gamma\in\mathbb N^n$, $1\le r<n$, and put
$\delta=\overline\gamma_r-\overline\gamma_{r+1}$.
Then $\delta\ne0$ and
\begin{equation}
 s_rF_\gamma=\frac{1}{\delta}F_\gamma+
 \left(1-\frac{1}{\delta}\right)F_{s_r\gamma}.
 \label{eqn-r19-F-reflection}
\end{equation}
Moreover, the interchange coefficient $1-1/\delta$ is zero if and only if
$\gamma_r=\gamma_{r+1}$.
\end{proposition}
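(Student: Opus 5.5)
We derive \eqref{eqn-r19-F-reflection} from the classical Knop--Sahi reflection formula for the monic polynomials $E_\gamma$, and then absorb the normalization $F_\gamma=d_\gamma E_\gamma$ through a hook comparison.

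\textbf{Step 1: $\delta\ne0$.} This is immediate from \eqref{eqn-r19-eigenvalue-rank}: the eigenvalues $\overline\gamma_r$ and $\overline\gamma_{r+1}$ are distinct polynomials in $\alpha$.

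\textbf{Step 2: the equal-entries case.} If $\gamma_r=\gamma_{r+1}$, then $\operatorname{rk}_\gamma(r+1)=\operatorname{rk}_\gamma(r)+1$, so $\delta=1$. In this case $E_\gamma$ is symmetric in $x_r,x_{r+1}$ \cite{MR98k:33040}, so $s_rF_\gamma=F_\gamma$. This agrees with \eqref{eqn-r19-F-reflection}, whose interchange coefficient is $0$.

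\textbf{Step 3: the unequal case.} Now suppose $\gamma_r\ne\gamma_{r+1}$. The rank-tie argument of \Cref{lem-cherednik-row-exchange} shows that $|\delta|\ge|\alpha(\gamma_r-\gamma_{r+1})|-\ldots$ is a genuine nonconstant polynomial in $\alpha$. Hence $\delta\ne1$, and $1-1/\delta\ne0$. This proves the ``iff'' clause.

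\textbf{Step 4: the monic formula.} We use the relation $s_r=(\xi_r-\xi_{r+1})^{-1}\cdots$, in Knop--Sahi form. Since $s_r$ intertwines $\xi_r$ and $\xi_{r+1}$ up to the rank-one correction, $s_rE_\gamma-\tfrac1\delta E_\gamma$ is an eigenfunction with eigenvalues $s_r\overline\gamma=\overline{s_r\gamma}$ (by \Cref{lem-cherednik-row-exchange}). Simple spectrum then forces it to be a multiple of $E_{s_r\gamma}$. Comparing leading monomials gives the standard formula. Assume $\gamma_r>\gamma_{r+1}$. Then
\[
 s_rE_\gamma=\tfrac1\delta E_\gamma+\bigl(1-\tfrac1{\delta^2}\bigr)E_{s_r\gamma},
\]
while for $\gamma_r<\gamma_{r+1}$ the coefficient of $E_{s_r\gamma}$ is $1$.

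\textbf{Step 5: the hook comparison.} Multiplying by $d_\gamma$, it remains to show:
\begin{itemize}
\item if $\gamma_r>\gamma_{r+1}$, then $d_\gamma/d_{s_r\gamma}=\delta/(\delta+1)$, i.e.\ $d_\gamma(1-1/\delta^2)=d_{s_r\gamma}(1-1/\delta)$;
\item if $\gamma_r<\gamma_{r+1}$, then $d_\gamma=d_{s_r\gamma}(1-1/\delta)$.
\end{itemize}
These two conditions are consistent with each other, since $\delta$ changes sign under $s_r$.

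To check them, compare the boxes of $\gamma$ and $s_r\gamma$ row by row. Arms are preserved under the row swap. For rows other than $r$ and $r+1$, the leg is preserved, because the legs count via the conditions $h<i$ versus $h>i$ with the $+1$ shift symmetric. In rows $r,r+1$, exactly one box changes its leg by one: the box in the longer row at column $\gamma_{\min}+1$, whose leg gains or loses the contribution of the shorter row. For that box the upper hook equals $\alpha(\gamma_r-\gamma_{r+1})+\operatorname{rk}$-difference, which is $\pm\delta$ or $\pm\delta+1$. This yields exactly the ratio $\delta/(\delta+1)$.

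\textbf{Main obstacle.} I expect Step 5 to be the main difficulty: tracking precisely which single box's leg changes, and verifying that its hook equals $\delta$ up to the right sign and shift, given the asymmetric leg definition (the $+1$ for $h<i$). I would settle this by writing $\operatorname{rk}_\gamma(r)-\operatorname{rk}_\gamma(r+1)$ as a count of rows with length strictly between the two row lengths, and checking it against $l_\gamma$ at that box.
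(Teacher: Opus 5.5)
Your proof is correct, but it distributes the work differently from the paper. The paper treats only the ordering $\gamma_r<\gamma_{r+1}$ at the level of $E_\gamma$, using Knop--Sahi's $s_rE_\gamma=\frac1\delta E_\gamma+E_{s_r\gamma}$. It cites Sahi's hook-ratio lemma for $d_\gamma/d_{s_r\gamma}=(\delta-1)/\delta$. It then gets the case $\gamma_r>\gamma_{r+1}$ directly for $F_\gamma$: apply the first case to $\nu=s_r\gamma$ (whose eigenvalue difference is $-\delta$), apply $s_r$, use $s_r^2=1$, and divide by $\delta+1$. So it never needs your coefficient $1-1/\delta^2$ or the ratio $\delta/(\delta+1)$.

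You instead state both monic formulas and verify the hook ratios yourself by the one-box comparison, and that comparison is sound. For $a=\gamma_r<b=\gamma_{r+1}$ the only changed box gives $d_\gamma(r+1,a+1)=1-\delta$ and $d_{s_r\gamma}(r,a+1)=-\delta$. For $a>b$ it gives $d_\gamma(r,b+1)=\delta$ and $d_{s_r\gamma}(r+1,b+1)=\delta+1$. This is essentially the computation the paper carries out later in \Cref{lem-r19-hook-interchange}. Your route therefore makes the proposition self-contained on the hook side, while the paper's inversion trick saves the second case. In fact your own consistency remark (the sign change $\delta\mapsto-\delta$ under $s_r$) already makes one of your two hook checks redundant.

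Two small corrections. First, comparing the coefficient of $x^{s_r\gamma}$ gives the coefficient $1$ only when $\gamma_r<\gamma_{r+1}$. For $1-1/\delta^2$ you must also compare the coefficient of $x^\gamma$, which forces the $x^{s_r\gamma}$-coefficient of $E_\gamma$ to be $1/\delta$; alternatively, invert as the paper does. Second, legs outside rows $r,r+1$ are preserved because those two rows lie on the same side of every other row, not because of a symmetry in the $+1$ shift. Also, Step~3 needs only that $\delta$ has nonzero $\alpha$-coefficient, not an absolute-value bound.
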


\begin{proof}
Suppose first that $\gamma_r<\gamma_{r+1}$ and set $\nu=s_r\gamma$.
The coefficient of $\alpha$ in $\delta$ is nonzero, so $\delta\ne0$.
By \Cref{lem-cherednik-row-exchange},
\begin{equation}\label{eq:diff:cherednik:sr}
 \overline\nu_r-\overline\nu_{r+1}
 =\overline\gamma_{r+1}-\overline\gamma_r=-\delta.
\end{equation}
Knop--Sahi \cite[Proposition~4.3]{MR98k:33040} gives
$-\delta E_\nu=(-\delta s_r+1)E_\gamma$,
hence
$s_rE_\gamma=\frac{1}{\delta}E_\gamma+E_\nu$.
Multiplying by $d_\gamma$ and applying Sahi's hook-ratio lemma to $\nu$
\cite[Lemma~4.2]{MR98g:05154} gives, by
\cref{eq:diff:cherednik:sr},
\[
 \frac{d_\gamma}{d_\nu}
 =\frac{-\delta+1}{-\delta}
 =\frac{\delta-1}{\delta}.
\]
Therefore
\[
 s_rF_\gamma
 =\frac{1}{\delta}F_\gamma+\frac{d_\gamma}{d_\nu}F_\nu
 =\frac{1}{\delta}F_\gamma+
   \left(1-\frac{1}{\delta}\right)F_\nu,
\]
which is \eqref{eqn-r19-F-reflection}.

If $\gamma_r>\gamma_{r+1}$, set $\nu=s_r\gamma$.  Applying the first case
to $\nu$ gives
\[
 s_rF_\nu=-\frac{1}{\delta}F_\nu+
 \left(1+\frac{1}{\delta}\right)F_\gamma.
\]
Applying $s_r$ and using $s_r^2=1$ gives
\[
 (\delta+1)s_rF_\gamma
 =\delta F_\nu+s_rF_\nu
 =\frac{1}{\delta}(\delta+1)F_\gamma
  +\frac{1}{\delta}(\delta^2-1)F_\nu.
\]
Since the coefficient of $\alpha$ in $\delta$ is
$\gamma_r-\gamma_{r+1}\ne0$, we have $\delta+1\ne0$, and division by
$\delta+1$ gives \eqref{eqn-r19-F-reflection}.

If $\gamma_r=\gamma_{r+1}$, the coleg-length definition gives
$l'_\gamma(r+1)=l'_\gamma(r)+1$, so $\delta=1$.  Also
$s_rF_\gamma=F_\gamma$ by
Knop--Sahi~\cite[Lemma~2.4]{MR98k:33040}, and
\eqref{eqn-r19-F-reflection} follows.

For unequal parts $\delta-1$ has nonzero $\alpha$-coefficient, whereas in the
equal case $\delta=1$.  This also proves the stated vanishing criterion.
\end{proof}

Put
\[
 \Phi(\gamma_1,\ldots,\gamma_n)
  =(\gamma_2,\ldots,\gamma_n,\gamma_1+1),\qquad
 (\Phi f)(x)=x_nf(x_n,x_1,\ldots,x_{n-1}),
\]
and
\[
 \rho(\gamma)=\overline\gamma_1+\alpha+n,\qquad
 \rho'(\gamma)=\overline\gamma_1+\alpha+n-1.
\]

Sahi \cite[Lemma~4.1]{MR98g:05154} proves the upper-hook raising ratio.  The
lower-hook ratio follows from the same box correspondence.

\begin{lemma}
\label{lem-r19-lower-hook-raising}
For every $\gamma\in\mathbb N^n$,
$d'_{\Phi\gamma}/d'_\gamma=\rho'(\gamma)$.
\end{lemma}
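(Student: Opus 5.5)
The plan is to reuse the box correspondence behind Sahi's upper-hook raising ratio \cite[Lemma~4.1]{MR98g:05154} and check that it preserves arms and legs individually, not only upper hooks. Then lower hooks match as well, and the only unmatched box contributes $\rho'(\gamma)$.

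First, fix the correspondence. The diagram of $\Phi\gamma$ has rows $1,\ldots,n-1$ equal to rows $2,\ldots,n$ of $\gamma$, and row $n$ of length $\gamma_1+1$. Send a box $(i,j)$ of $\gamma$ with $i\ge2$ to $(i-1,j)$, and send $(1,j)$ to $(n,j+1)$. This is a bijection from the boxes of $\gamma$ onto the boxes of $\Phi\gamma$ other than $(n,1)$. Sahi's lemma says that corresponding boxes have equal upper hooks $d$, and that the extra box has upper hook $\rho(\gamma)$.

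Second, pass from upper hooks to arms and legs. Since $d_\gamma(s)=\alpha(a_\gamma(s)+1)+l_\gamma(s)+1$, with $a$ and $l$ integers independent of $\alpha$, equality of upper hooks as polynomials in $\alpha$ forces equality of arms and of legs. The arms can also be checked directly: $(1,j)$ has arm $\gamma_1-j$, and $(n,j+1)$ has arm $\gamma_1+1-(j+1)$, which is the same. Because $d'=d-1$ is a function of $(a,l)$, corresponding boxes then have equal lower hooks. Hence $d'_{\Phi\gamma}/d'_\gamma=d'_{\Phi\gamma}((n,1))$.

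Third, compute the lower hook of $(n,1)$ directly. Its arm is $\gamma_1$. Row $n$ is the last row, so only the first sum in the leg contributes. That sum counts $h<n$ with $1\le(\Phi\gamma)_h+1\le\gamma_1+1$, that is, $h\ge2$ in $\gamma$ with $\gamma_h\le\gamma_1$. This number equals $n-1-\#\{h>1:\gamma_h>\gamma_1\}=n-1-l'_\gamma(1)$, since the first sum in $l'_\gamma(1)$ is empty. Therefore
\[
d'_{\Phi\gamma}((n,1))=\alpha(\gamma_1+1)+n-1-l'_\gamma(1)=\overline\gamma_1+\alpha+n-1=\rho'(\gamma).
\]

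The only delicate point is whether the leg bookkeeping really matches box by box, in particular for boxes of row $1$ moved to row $n$, where "above" and "below" swap roles. If one prefers not to rely on the upper-hook identity of Sahi's lemma, this check can be done directly. For a row-$1$ box, rows $h>1$ with $j\le\gamma_h\le\gamma_1$ become rows above row $n$ counted by $j+1\le\gamma_h+1\le\gamma_1+1$, which is the same condition. For boxes in other rows, row $1$ moves from above to below. Its condition $j\le\gamma_1+1\le\gamma_i$ becomes $j\le\gamma_1+1\le\gamma_i$ as a row below, so it is unchanged.
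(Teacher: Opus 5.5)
Your proposal is correct and follows the paper's proof: the same Sahi box correspondence, box-by-box equality of arms and legs (which the paper asserts ``directly from the definitions'' and your final paragraph actually verifies), and the computation that the new box $(n,1)$ has arm $\gamma_1$ and leg $n-1-l'_\gamma(1)$, giving lower hook $\rho'(\gamma)$. Your direct check is enough on its own, so the detour through Sahi's upper-hook identity (deducing equal arms and legs from equal upper hooks as polynomials in $\alpha$) is valid but unnecessary.
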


\begin{proof}
Use Sahi's box correspondence
\cite[proof of Lemma~4.1]{MR98g:05154}:
\[
 (i,j)\longmapsto
 \begin{cases}
  (i-1,j),&i>1,\\
  (n,j+1),&i=1.
 \end{cases}
\]
It matches the boxes of $\gamma$ with all boxes of $\Phi\gamma$ except
$(n,1)$.  Directly from the definitions, every matched box has the same arm
and leg, hence the same upper and lower hooks.  The new box has arm
$\gamma_1$ and leg $n-1-l'_\gamma(1)$, so its lower hook is
\[
 \alpha(\gamma_1+1)+n-1-l'_\gamma(1)
 =\overline\gamma_1+\alpha+n-1=\rho'(\gamma).
\]
Hence $d'_{\Phi\gamma}=\rho'(\gamma)d'_\gamma$.
\end{proof}

\begin{proposition}
\label{prop-r19-raising}
For every $\gamma\in\mathbb N^n$,
$\rho(\gamma)$ is nonzero and
\begin{equation}
 F_{\Phi\gamma}=\rho(\gamma)\Phi F_\gamma.
 \label{eqn-r19-raising}
\end{equation}
\end{proposition}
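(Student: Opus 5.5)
We need $F_{\Phi\gamma}=\rho(\gamma)\Phi F_\gamma$ with $\rho(\gamma)=\overline\gamma_1+\alpha+n$. The plan is to combine the monic raising identity of Knop--Sahi with Sahi's upper-hook raising ratio.

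First, $\rho(\gamma)\ne0$. Since $l'_\gamma(1)\le n-1$, we have
\[
 \rho(\gamma)=\alpha(\gamma_1+1)+n-l'_\gamma(1).
\]
This polynomial in $\alpha$ has positive $\alpha$-coefficient $\gamma_1+1$, so it is nonzero. Equivalently, $\rho(\gamma)$ is the upper hook $d_{\Phi\gamma}((n,1))$ of the new box in the box correspondence used in the proof of \Cref{lem-r19-lower-hook-raising}. That box has arm $\gamma_1$ and leg $n-1-l'_\gamma(1)$, so
\[
 d_{\Phi\gamma}((n,1))=\alpha(\gamma_1+1)+n-l'_\gamma(1)=\rho(\gamma).
\]

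Next, the monic identity. Knop--Sahi \cite{MR98k:33040} prove that $E_{\Phi\gamma}=\Phi E_\gamma$. The reason is that $\Phi$ intertwines the Cherednik operators, $\xi_i\Phi=\Phi\xi_{i+1}$ for $i<n$ and $\xi_n\Phi=\Phi(\xi_1+\alpha)$, so $\Phi E_\gamma$ is a simultaneous eigenfunction. Its eigenvalues are $(\overline\gamma_2,\ldots,\overline\gamma_n,\overline\gamma_1+\alpha)$, and one checks directly from \eqref{eqn-r19-eigenvalue-rank} that this is $\overline{\Phi\gamma}$. The leading monomial $x^\gamma$ goes to $x^{\Phi\gamma}$, and all other monomials go to lower ones. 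Since the eigenvalues separate compositions (they are pairwise distinct as polynomials in $\alpha$), $\Phi E_\gamma$ is monic with leading term $x^{\Phi\gamma}$ and equals $E_{\Phi\gamma}$. I will cite this as Knop--Sahi's raising lemma rather than re-derive the triangularity.

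Finally, normalize. From $F_\eta=d_\eta E_\eta$ and linearity of $\Phi$,
\[
 F_{\Phi\gamma}=d_{\Phi\gamma}E_{\Phi\gamma}=\frac{d_{\Phi\gamma}}{d_\gamma}\,\Phi F_\gamma .
\]
Sahi's Lemma~4.1 \cite{MR98g:05154} gives $d_{\Phi\gamma}/d_\gamma=\rho(\gamma)$. This also follows from the same box correspondence as in \Cref{lem-r19-lower-hook-raising}: matched boxes keep their arm and leg, hence their upper hooks, and the new box contributes the upper hook $\rho(\gamma)$ computed above. Substituting gives \eqref{eqn-r19-raising}.

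The main obstacle is the identity $E_{\Phi\gamma}=\Phi E_\gamma$, specifically confirming that the paper's conventions (the $\Phi$ action, the leg and eigenvalue conventions) match Knop--Sahi's exactly. The key check is that the eigenvalue of $\Phi E_\gamma$ at position $n$ is $\overline\gamma_1+\alpha$. This holds because row $n$ of $\Phi\gamma$ has length $\gamma_1+1$, and its rank equals the rank of row $1$ in $\gamma$. Indeed, rows $h>1$ of $\gamma$ with $\gamma_h\ge\gamma_1+1$ now precede it, which matches the count of rows $h>1$ with $\gamma_h>\gamma_1$. Rows with $\gamma_h=\gamma_1$ remain shorter.
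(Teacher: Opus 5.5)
Your proposal is correct and matches the paper's proof: you cite Knop--Sahi for $E_{\Phi\gamma}=\Phi E_\gamma$, rescale using $F=dE$, and invoke Sahi's hook ratio $d_{\Phi\gamma}/d_\gamma=\rho(\gamma)$. You then get $\rho(\gamma)\ne0$ from its $\alpha$-coefficient $\gamma_1+1$. Your extra sketches (the intertwining argument for $E_{\Phi\gamma}=\Phi E_\gamma$ and the box-correspondence check of the hook ratio) agree with the paper's conventions, but they are supplementary to the cited results.
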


\begin{proof}
Knop--Sahi's raising formula
\cite[Corollary~4.2]{MR98k:33040} gives $E_{\Phi\gamma}=\Phi E_\gamma$.
Since $F_\gamma=d_\gamma E_\gamma$, we get
\[
 F_{\Phi\gamma}
 =\frac{d_{\Phi\gamma}}{d_\gamma}\Phi F_\gamma.
\]
Sahi's hook recursion \cite[Lemma~4.1]{MR98g:05154} gives
$d_{\Phi\gamma}/d_\gamma=\rho(\gamma)$,
which proves the identity.  Its $\alpha$-coefficient is $\gamma_1+1>0$,
so $\rho(\gamma)\ne0$.
\end{proof}

\begin{lemma}
\label{lem-r19-Phi-eigenvalues}
For every $\gamma\in\mathbb N^n$,
\[
 \overline{\Phi\gamma}
 =
 \bigl(
  \overline\gamma_2,\ldots,\overline\gamma_n,
  \overline\gamma_1+\alpha
 \bigr).
\]
\end{lemma}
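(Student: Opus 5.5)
The plan is a direct computation from the eigenvalue formula \eqref{eqn-r19-eigenvalue-rank}, $\overline\eta_i=\alpha\eta_i-\operatorname{rk}_\eta(i)+1$. Put $\mu=\Phi\gamma$, so that $\mu_i=\gamma_{i+1}$ for $i<n$ and $\mu_n=\gamma_1+1$. I will compare the rank of each row of $\mu$ with the rank of the corresponding row of $\gamma$. The identity to prove then reduces to two statements:
\[
\operatorname{rk}_\mu(i)=\operatorname{rk}_\gamma(i+1)\ (i<n),\qquad \operatorname{rk}_\mu(n)=\operatorname{rk}_\gamma(1).
\]
The second gives $\overline\mu_n=\alpha(\gamma_1+1)-\operatorname{rk}_\gamma(1)+1=\overline\gamma_1+\alpha$, and the first gives $\overline\mu_i=\overline\gamma_{i+1}$.

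I will work with $l'$ rather than $\operatorname{rk}$. For $i<n$, the set $\{h<i:\mu_h\ge\mu_i\}$ corresponds under $h\mapsto h+1$ to $\{2\le h'<i+1:\gamma_{h'}\ge\gamma_{i+1}\}$. Similarly, $\{i<h<n:\mu_h>\mu_i\}$ corresponds to $\{h'>i+1:\gamma_{h'}>\gamma_{i+1}\}$. The only remaining contribution is the last row $h=n$, which lies below row $i$ in $\mu$ and counts when $\gamma_1+1>\gamma_{i+1}$, that is, when $\gamma_1\ge\gamma_{i+1}$. This is exactly the condition under which row $1$ of $\gamma$, lying above row $i+1$, contributes to $l'_\gamma(i+1)$. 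Hence $l'_\mu(i)=l'_\gamma(i+1)$.

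For $i=n$, the count is $l'_\mu(n)=\#\{h<n:\gamma_{h+1}\ge\gamma_1+1\}=\#\{h'\ge2:\gamma_{h'}>\gamma_1\}$. Since all of these rows lie below row $1$ in $\gamma$, this equals $l'_\gamma(1)$.

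The only delicate point is the tie-breaking convention, namely that ties are counted with $\ge$ above and $>$ below. It is handled by the observation that adding one box to $\gamma_1$ converts the weak inequality into a strict one exactly when the row moves from top to bottom. No other obstacle is expected; this is the same bookkeeping as in the box correspondence used for Lemma~\ref{lem-r19-lower-hook-raising}, and it is consistent with Knop--Sahi's eigenvalue shift under raising.
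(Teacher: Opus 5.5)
Your proposal is correct and matches the paper's proof. Both use the eigenvalue formula to reduce the claim to $\operatorname{rk}_{\Phi\gamma}(i)=\operatorname{rk}_\gamma(i+1)$ for $i<n$ and $\operatorname{rk}_{\Phi\gamma}(n)=\operatorname{rk}_\gamma(1)$, and in both the only comparison that changes is with the old first row, handled by $\mathbf1_{\{\gamma_1\ge\gamma_{i+1}\}}=\mathbf1_{\{\gamma_1+1>\gamma_{i+1}\}}$; your explicit splitting of the $l'$-counts just writes out the paper's short rank argument in more detail.
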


\begin{proof}
Put $\widetilde\gamma=\Phi\gamma$.  For $i>1$, the row of length $\gamma_i$
moves from position $i$ to $i-1$.  The only row whose side relative to it
changes is the old first row, and
\[
 \mathbf1_{\{\gamma_1\ge\gamma_i\}}
 =\mathbf1_{\{\gamma_1+1>\gamma_i\}}.
\]
Hence
\[
 \operatorname{rk}_{\widetilde\gamma}(i-1)
 =\operatorname{rk}_\gamma(i)\qquad(i>1).
\]
For the raised row,
\[
 \operatorname{rk}_{\widetilde\gamma}(n)
 =1+\#\{h>1:\gamma_h\ge\gamma_1+1\}
 =1+\#\{h>1:\gamma_h>\gamma_1\}
 =\operatorname{rk}_\gamma(1).
\]
Substitution in \eqref{eqn-r19-eigenvalue-rank} gives the displayed
transformation; only the raised row has acquired an additional
$\alpha$ in its length term.
\end{proof}


\section{Support of the degree-one product}\label{sec-degree-one-product-supp}

For a polynomial $h$, write $[h]_\mu$ for the coefficient of $F_\mu$ in
$h$.  Its \emph{support} is the set of $\mu$ for which $[h]_\mu\ne0$ in
$\mathbb Q(\alpha)$.  By \eqref{eqn-r19-dual}, the $F$- and $F^*$-supports
coincide.

Put $e_1=x_1+\cdots+x_n$.
\begin{lemma}\label{lem-r19-eone-commutator}
For $1\le i\le n$,
\begin{equation}
 [\xi_i,e_1]=\alpha x_i.
 \label{eqn-r19-eone-commutator}
\end{equation}
Consequently, for $a_1,\ldots,a_n\in\mathbb Q(\alpha)$ and
$\eta,\lambda\in\mathbb N^n$,
\begin{equation}
 \alpha\left[\left(\sum_{i=1}^na_ix_i\right)F_\eta\right]_\lambda
 =\left(\sum_{i=1}^na_i
   (\overline\lambda_i-\overline\eta_i)\right)
   [e_1F_\eta]_\lambda.
 \label{eqn-r19-linear-spectral-coefficient}
\end{equation}
\end{lemma}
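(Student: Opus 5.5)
The plan is to prove the commutator identity \eqref{eqn-r19-eone-commutator} from an explicit form of the Cherednik operators, and then to obtain \eqref{eqn-r19-linear-spectral-coefficient} by pairing the commutator with the eigenvector property of the $F_\mu$.

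For the first part I would use the Knop--Sahi presentation
\[
 \xi_i=\alpha x_i\partial_i+\sum_{j<i}\frac{x_i}{x_i-x_j}(1-s_{ij})
 +\sum_{j>i}\frac{x_j}{x_i-x_j}(1-s_{ij})+1-i,
\]
or whatever normalization makes $E_\eta$ have eigenvalue $\overline\eta_i$ as in \eqref{eqn-r19-eigenvalues}.

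\begin{itemize}
\item The constant term commutes with $e_1$.
\item The Euler-type term gives $[\alpha x_i\partial_i,e_1]=\alpha x_i$.
\item Each exchange operator $1-s_{ij}$ satisfies $[1-s_{ij},e_1]=0$, because $e_1$ is symmetric. Hence $(1-s_{ij})(e_1f)=e_1(1-s_{ij})f$.
\item Multiplication by the rational prefactors commutes with multiplication by $e_1$.
\end{itemize}

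So only the derivative term contributes, and $[\xi_i,e_1]=\alpha x_i$. The one point to check carefully is that the normalization convention agrees with \cite{MR98k:33040}. Any additive constant or scalar multiple of the identity is irrelevant, and the coefficient of $x_i\partial_i$ is $\alpha$ exactly, matching the $\alpha\eta_i$ in the eigenvalue.

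For the second part, apply $[\xi_i,e_1]$ to $F_\eta$. Since $\xi_iF_\eta=\overline\eta_iF_\eta$, we get
\[
 \alpha x_iF_\eta=\xi_i(e_1F_\eta)-\overline\eta_i\,e_1F_\eta .
\]
Expand $e_1F_\eta=\sum_\mu [e_1F_\eta]_\mu F_\mu$ and use $\xi_iF_\mu=\overline\mu_iF_\mu$. This gives
\[
 \alpha[x_iF_\eta]_\lambda=(\overline\lambda_i-\overline\eta_i)[e_1F_\eta]_\lambda .
\]
Multiplying by $a_i$, which are scalars in $\mathbb Q(\alpha)$ and so commute with everything, and summing over $i$ yields \eqref{eqn-r19-linear-spectral-coefficient}. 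This step requires that the $F_\mu$ form a basis, so that coefficients are well defined, and that $\xi_i$ acts diagonally on them. Both facts are recorded in the preliminaries.

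The main obstacle is the first part: the explicit form of $\xi_i$, and in particular confirming that the non-derivative parts are built only from exchange operators and multiplications that commute with $e_1$. If a different formulation is used, with Dunkl operators $\xi_i=\alpha x_iD_i+\ldots$, the same argument applies. One uses $[D_i,e_1]=1$, which holds because the difference parts of the Dunkl operator annihilate commutators with symmetric linear functions. Then $[x_iD_i,e_1]=x_i$, and the remaining terms in $\xi_i$ are permutations $s_{ij}$ and constants, which commute with $e_1$.
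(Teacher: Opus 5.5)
Your proposal is correct and is essentially the paper's proof. Your presentation of $\xi_i$ agrees with the paper's, since $\frac{1-s_{ij}}{x_i-x_j}\,x_j=\frac{x_i}{x_i-x_j}(1-s_{ij})-1$ accounts for your constant $1-i$; only the Euler term fails to commute with the symmetric $e_1$, and applying the commutator to $F_\eta$ and comparing $F_\lambda$-coefficients gives \eqref{eqn-r19-linear-spectral-coefficient} (you compare coefficients for each $i$ before weighting by $a_i$, the paper weights first, which makes no difference).
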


\begin{proof}
In the convention of Knop--Sahi \cite[Section~3]{MR98k:33040},
\[
 \xi_i=\alpha x_i\partial_i
  +\sum_{j<i}N_{ij}x_j+\sum_{j>i}x_jN_{ij},
 \qquad
 N_{ij}=\frac{1-s_{ij}}{x_i-x_j}.
\]
Since $e_1$ is symmetric, $N_{ij}(e_1f)=e_1N_{ij}f$; multiplication by
$x_j$ also commutes with $e_1$.  Thus the reflection terms commute with
$e_1$, while
\[
 \alpha x_i\partial_i(e_1f)
 =e_1\alpha x_i\partial_i f+\alpha x_if.
\]
This proves \eqref{eqn-r19-eone-commutator}.  Summing the commutators with
weights $a_i$ and applying the result to $F_\eta$ gives
\[
 \sum_i a_i\xi_i(e_1F_\eta)
 =e_1\sum_i a_i\overline\eta_iF_\eta
   +\alpha\sum_i a_ix_iF_\eta.
\]
Comparison of the $F_\lambda$-coefficients, using
$\xi_iF_\lambda=\overline\lambda_iF_\lambda$, gives
\eqref{eqn-r19-linear-spectral-coefficient}.
\end{proof}

Knop--Sahi's tableau formula \cite[Theorem~5.1]{MR98k:33040}, specialized
to the one-box shape, gives
\begin{equation}
 F_{\varepsilon_k}=(\alpha+k)x_k+\sum_{i>k}x_i,
 \qquad 1\le k\le n.
 \label{eqn-r19-degree-one-explicit}
\end{equation}
In \eqref{eqn-r19-F-reflection}, we call the $F_\gamma$ summand the
\emph{fixed term} and the $F_{s_r\gamma}$ summand the \emph{interchange term}.
Along a formal branch, let
$\sigma=(\sigma_1,\ldots,\sigma_n)$ record source indices by current
position, so initially $\sigma_r=r$.  Write $s_r\sigma$ for the tuple
obtained by swapping $\sigma_r,\sigma_{r+1}$, and
\[
 \Phi\sigma=(\sigma_2,\ldots,\sigma_n,\sigma_1^+),
\]
where the superscript records the unique row that gains one box.  A fixed
choice leaves $\sigma$ unchanged, an interchange at $s_r$ replaces it by
$s_r\sigma$, and raising replaces it by $\Phi\sigma$.

Since $\Phi=x_ns_{n-1}\cdots s_1$, cancellation of adjacent
transpositions gives
\begin{equation}
 x_i=s_i\cdots s_{n-1}\Phi s_1\cdots s_{i-1}.
 \label{eqn-r19-xi-word}
\end{equation}
The operators act on $F_\eta$ in the order
\[
 s_{i-1},s_{i-2},\ldots,s_1,\quad
 \Phi,\quad
 s_{n-1},s_{n-2},\ldots,s_i.
\]
Associate $s_r$ before $\Phi$ with source index $r$, and $s_r$ after
$\Phi$ with source index $r+1$.  These are precisely the source indices
other than $i$.  Expanding by \eqref{eqn-r19-F-reflection}, a subset
$K\ni i$ specifies one choice by taking the fixed term exactly at the
reflections whose source indices lie in $K$; every complete choice arises
uniquely in this way.

\begin{lemma}\label{lem-r19-coordinate-expansion}
For $\eta\in\mathbb N^n$ and $1\le r\le n$,
\begin{equation}
 \{\mu:[x_rF_\eta]_\mu\ne0\}
 =\{c_K(\eta):K\text{ is maximal with respect to }\eta,\ r\in K\},
 \label{eqn-r19-coordinate-support}
\end{equation}
and the compositions on the right are distinct.
\end{lemma}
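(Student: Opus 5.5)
We expand $x_rF_\eta$ through the word \eqref{eqn-r19-xi-word}, $x_r=s_r\cdots s_{n-1}\Phi s_1\cdots s_{r-1}$, applied to $F_\eta$ with \eqref{eqn-r19-F-reflection} at each reflection and \eqref{eqn-r19-raising} at $\Phi$. As set up before the statement, each complete branch is indexed by a subset $K\ni r$: we take the fixed term exactly at the reflections whose source indices lie in $K$. The proof has three steps: compute the endpoint of each branch, discard the branches whose weight vanishes, and show that no cancellation occurs among the survivors.

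\emph{Endpoint of a branch.} Before $\Phi$, the reflections $s_{r-1},\dots,s_1$ act in turn. An interchange at $s_j$ with $j\notin K$ moves the tracked entry past position $j$, and a fixed choice leaves it in place. The entry that reaches position $1$ is therefore $\eta_{j_1}$, where $j_1=\min K$, because every index below $j_1$ was interchanged. After $\Phi$, this entry is raised and placed at position $n$. The reflections $s_{n-1},\dots,s_r$ then carry it back down and stop it exactly at the selected positions. Tracking $\sigma$ shows that the endpoint is $c_K(\eta)$: each selected $j_p$ receives $\eta_{j_{p+1}}$, the last selected index receives $\eta_{j_1}+1$, and all other entries are unchanged. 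I expect a short induction on the number of reflections to make this precise.

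\emph{Which branches vanish.} By the vanishing criterion in \Cref{lem-r19-F-reflection}, an interchange coefficient is zero exactly when the two entries being swapped are equal. A fixed coefficient $1/\delta$ is never zero. So a branch has nonzero weight exactly when every interchange it performs is between unequal entries. I will check that this condition is exactly (M1)--(M3) for $K$:
\begin{itemize}
\item the interchanges below $j_1$ compare $\eta_i$ with $\eta_{j_1}$, giving (M1);
\item after $\Phi$, the interchanges between consecutive selected indices compare $\eta_i$ with the carried value $\eta_{j_p}$, giving (M2);
\item the interchanges above the last selected index compare $\eta_i$ with $\eta_{j_1}+1$, giving (M3).
\end{itemize}
The index $r$ is forced into $K$ by the construction. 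Hence the nonvanishing branches are exactly those indexed by maximal $K\ni r$. Each such branch contributes a nonzero multiple of $F_{c_K(\eta)}$, since the weight is a product of nonzero factors, with $\rho\ne0$ by \Cref{prop-r19-raising}.

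\emph{No cancellation.} Distinct maximal subsets give distinct compositions, by the uniqueness statement of \Cref{lem-r19-maximal-completion}. Now suppose $K$ is not maximal, so some branch has a zero factor. I must make sure such a branch cannot reach the target of a maximal $K'$ with a nonzero weight. Since vanishing branches contribute zero, the coefficient $[x_rF_\eta]_{c_K(\eta)}$ is a sum over the nonvanishing branches ending at $c_K(\eta)$. Only one such branch exists, namely the one indexed by the unique maximal $K$. The coefficient is therefore a single nonzero term, and both \eqref{eqn-r19-coordinate-support} and the distinctness claim follow.

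The main obstacle is the bookkeeping in the endpoint step: showing that equal-entry interchanges are exactly the forbidden cases. The subtle point is that on a vanishing branch an interchange of equal entries produces the same composition, so $F_{s_j\gamma}=F_\gamma$ there. I will handle this by noting that the weight is literally zero, so no identification issue arises.
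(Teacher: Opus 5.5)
Your proposal is correct and follows essentially the same route as the paper: you expand $x_r$ through its word into branches indexed by $K\ni r$, track $\sigma$ to get the endpoint $c_K(\eta)$, note that a branch vanishes exactly when some unselected row is interchanged with a selected partner of equal length (the failure of (M1)--(M3)), and rule out cancellation by the uniqueness in \Cref{lem-r19-maximal-completion}. The looseness is only in the bookkeeping, which the paper makes precise by cutting the reflections into cyclic blocks: the carried label changes at each fixed step (after the fixed choice indexed by $j_p$, it is $j_p$ that moves), and the (M2) comparisons for gaps below $r$ occur before $\Phi$, not after it as your second bullet says.
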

\begin{proof}
Fix $i$ and let $K=\{j_1<\cdots<j_\ell\}\ni i=j_t$.  Consecutive
interchanges $s_{b-1},\ldots,s_a$ send
\[
 (\sigma_a,\ldots,\sigma_b)\longmapsto
 (\sigma_b,\sigma_a,\ldots,\sigma_{b-1})\qquad(a<b).
\]

The fixed choices therefore cut the reflections before $\Phi$ into the blocks
\[
 [1,j_1],\qquad [j_{p-1}+1,j_p]\quad(2\le p\le t),
\]
processed from $p=t$ down to $p=1$, where $p=1$ denotes $[1,j_1]$.
After $\Phi$, first process $[j_\ell,n]$ and then
\[
 [j_{p-1},j_p-1]\quad(p=\ell,\ell-1,\ldots,t+1).
\]
In each nontrivial block the rightmost source label is selected and the others
are unselected.  The first family puts $j_1$ in position $1$; after $\Phi$
sends it to position $n$ and adds one box, the second family gives
\begin{equation}
 j_{p+1}\longmapsto j_p\quad(p<\ell),\qquad
 j_1^+\longmapsto j_\ell,
 \qquad h\longmapsto h\quad(h\notin K).
 \label{eqn-r19-selected-transport}
\end{equation}
Hence the endpoint is $c_K(\eta)$.

The same rotations show that every unselected row is interchanged exactly
once.  Its selected partner has length
\[
 \eta_{j_1}\quad(h<j_1),\qquad
 \eta_{j_p}\quad(j_{p-1}<h<j_p),\qquad
 \eta_{j_1}+1\quad(h>j_\ell),
\]
respectively.  Fixed factors, reflection denominators, and the raising factor are nonzero; an
interchange factor vanishes exactly when its two row lengths are equal.  Thus the $K$-term is nonzero exactly when
(M1)--(M3) hold, i.e. exactly when $K$ is maximal.

Distinct maximal sets have distinct endpoints by
\Cref{lem-r19-maximal-completion}, so the surviving terms cannot cancel.
This proves \eqref{eqn-r19-coordinate-support}.
\end{proof}

\begin{lemma}\label{lem-r19-spectral-coordinate}
Let $L=\{j_1<\cdots<j_\ell\}$ be maximal with respect to $\eta$ and put
$\lambda=c_L(\eta)$.  Then
\begin{equation}
 \overline\lambda_{j_p}=\overline\eta_{j_{p+1}}\quad(p<\ell),
 \qquad
 \overline\lambda_{j_\ell}=\overline\eta_{j_1}+\alpha,
 \qquad
 \overline\lambda_i=\overline\eta_i\quad(i\notin L).
 \label{eqn-r19-maximal-eigenvalue-transformation}
\end{equation}
Define
\begin{equation}
 q_i=\overline\eta_i-\overline\lambda_i
 =\begin{cases}
   \overline\eta_{j_p}-\overline\eta_{j_{p+1}},&i=j_p,\ p<\ell,\\
   \overline\eta_{j_\ell}-\overline\eta_{j_1}-\alpha,&i=j_\ell,\\
   0,&i\notin L.
  \end{cases}
 \label{eqn-r19-q}
\end{equation}
Then $q_i\ne0$ exactly when $i\in L$, and
$[e_1F_\eta]_\lambda\ne0$.  Moreover,
\begin{equation}
 \frac{[x_iF_\eta]_\lambda}{q_i}
 =\frac{[x_jF_\eta]_\lambda}{q_j}
 =-\frac1\alpha[e_1F_\eta]_\lambda\ne0
 \qquad(i,j\in L).
 \label{eqn-r19-coordinate-common}
\end{equation}
In particular,
\[
 L=\{i:\overline\lambda_i\ne\overline\eta_i\}.
\]
\end{lemma}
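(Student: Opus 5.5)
Four steps: the eigenvalue formula, nonvanishing of $q_i$, the common ratio (\ref{eqn-r19-coordinate-common}) via \Cref{lem-r19-eone-commutator}, and finally $[e_1F_\eta]_\lambda\ne0$.

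\emph{Eigenvalues.} I would track eigenvalues along the branch used in the proof of \Cref{lem-r19-coordinate-expansion}. That branch carries $\eta$ to $\lambda=c_L(\eta)$ by a sequence of interchanges $s_r$ with $\gamma_r\ne\gamma_{r+1}$, then $\Phi$, then more such interchanges. Each interchange permutes the eigenvalue vector in the same way it permutes rows (\Cref{lem-cherednik-row-exchange}). The raising step shifts it cyclically and adds $\alpha$ to the raised row (\Cref{lem-r19-Phi-eigenvalues}). So the eigenvalue vector is transported exactly like the labels $\sigma$, and the transport rule \eqref{eqn-r19-selected-transport} gives \eqref{eqn-r19-maximal-eigenvalue-transformation}. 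The only caveat is that the interchanges used are on unequal rows. This holds because $L$ is maximal: every unselected row is swapped with a partner of a different length, by (M1)--(M3).

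\emph{Nonvanishing of $q_i$.} For $i\notin L$ we have $q_i=0$ directly. For $i=j_p$ with $p<\ell$, $q_i$ is a difference of two distinct eigenvalues of $\eta$, which is nonzero by the distinctness remark. For $i=j_\ell$ (including $\ell=1$), $q_i=\overline\eta_{j_\ell}-\overline\eta_{j_1}-\alpha$. If $\ell=1$ this equals $-\alpha\ne0$. Otherwise its $\alpha$-coefficient is $\eta_{j_\ell}-\eta_{j_1}-1$, and its constant term is $\operatorname{rk}_\eta(j_1)-\operatorname{rk}_\eta(j_\ell)$. If $\eta_{j_\ell}=\eta_{j_1}+1$, then row $j_\ell$ is strictly longer than row $j_1$, so its rank is strictly smaller, and the constant term is nonzero. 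Hence $q_i\ne0$ exactly for $i\in L$, and with \eqref{eqn-r19-maximal-eigenvalue-transformation} this gives $L=\{i:\overline\lambda_i\ne\overline\eta_i\}$.

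\emph{Common ratio.} I would apply \eqref{eqn-r19-linear-spectral-coefficient} with $a=\varepsilon_i$, which gives
\[
 \alpha[x_iF_\eta]_\lambda=-q_i[e_1F_\eta]_\lambda .
\]
For $i\in L$, dividing by $q_i$ yields \eqref{eqn-r19-coordinate-common}, apart from nonvanishing.

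\emph{Nonvanishing of $[e_1F_\eta]_\lambda$.} This is the main point. By \Cref{lem-r19-coordinate-expansion} with $r=j_1\in L$, $\lambda$ lies in the support of $x_{j_1}F_\eta$, so $[x_{j_1}F_\eta]_\lambda\ne0$. The displayed identity then forces $[e_1F_\eta]_\lambda\ne0$, so all the ratios are nonzero. The only delicate step is justifying that the eigenvalue transport survives through $\Phi$ (the $\sigma_1^+$ label). This is exactly \Cref{lem-r19-Phi-eigenvalues}, applied at the intermediate composition.
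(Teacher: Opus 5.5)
Your proposal is correct and follows essentially the same approach as the paper. The eigenvalues are transported along the nonzero $L$-branch via \Cref{lem-cherednik-row-exchange} and \Cref{lem-r19-Phi-eigenvalues}, and the identity $\alpha[x_iF_\eta]_\lambda=-q_i[e_1F_\eta]_\lambda$ combined with \Cref{lem-r19-coordinate-expansion} gives $[e_1F_\eta]_\lambda\ne0$. The only difference is that you check $q_i\ne0$ for $i\in L$ directly from the rank formula (distinct eigenvalues, and the strict rank inequality when $\eta_{j_\ell}=\eta_{j_1}+1$). The paper instead reads $q_i\ne0$ off the same identity, because $[x_iF_\eta]_\lambda\ne0$ for every $i\in L$; both arguments are valid.
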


\begin{proof}
Fix $i\in L$ and consider the nonzero $L$-term in the expansion of
$x_iF_\eta$.  By maximality every interchange is between unequal rows, so
\Cref{lem-cherednik-row-exchange} applies at each such step: the current
eigenvalue tuple is transformed by the same adjacent transposition as the
source-label tuple $\sigma$.  Fixed steps alter neither tuple, while
\Cref{lem-r19-Phi-eigenvalues} sends
\[
 (\zeta_1,\ldots,\zeta_n)\longmapsto
 (\zeta_2,\ldots,\zeta_n,\zeta_1+\alpha)
\]
at the raising step.  Combining these transformations with
\eqref{eqn-r19-selected-transport} gives
\eqref{eqn-r19-maximal-eigenvalue-transformation}, and then
\eqref{eqn-r19-q}.

For $i\in L$, \Cref{lem-r19-coordinate-expansion} gives
$[x_iF_\eta]_\lambda\ne0$.  The case $a_i=1$ and $a_r=0$ for $r\ne i$ of
\eqref{eqn-r19-linear-spectral-coefficient} is
\begin{equation}
 \alpha[x_iF_\eta]_\lambda
 =-q_i[e_1F_\eta]_\lambda.
 \label{eqn-r19-coordinate-spectral-coefficient}
\end{equation}
The left side is nonzero; hence $q_i\ne0$ for $i\in L$ and
$[e_1F_\eta]_\lambda\ne0$.  Since $q_i=0$ off $L$, division by $q_i$ gives
\eqref{eqn-r19-coordinate-common} and
$L=\{i:\overline\lambda_i\ne\overline\eta_i\}$.
\end{proof}
\begin{proof}[Proof of \Cref{thm-non-van-min}]
By \Cref{lem-r19-coordinate-expansion} and
\eqref{eqn-r19-degree-one-explicit}, a target occurs only as
$\lambda=c_L(\eta)$ with $L$ maximal.  By
\Cref{lem-r19-maximal-completion}, this is equivalent to (a), and $L$ is the
completion of the difference set $S=\{i_1<\cdots<i_s\}$, which is nonempty
because $|\lambda|=|\eta|+1$.

Fix $L=\{j_1<\cdots<j_\ell\}$, write $z_i=\overline\eta_i$, and put
\begin{equation}
 Q_k=(\alpha+k)q_k+\sum_{i>k}q_i.
 \label{eqn-r19-Qk}
\end{equation}
Apply \eqref{eqn-r19-linear-spectral-coefficient} to
\eqref{eqn-r19-degree-one-explicit}.  Since
$q_i=\overline\eta_i-\overline\lambda_i$, it gives
\[
 g_{k\eta}^\lambda
 =-\frac{d_\lambda d'_\lambda}{\alpha}
   [e_1F_\eta]_\lambda Q_k.
\]
By \Cref{lem-r19-spectral-coordinate} the factor preceding $Q_k$ is nonzero.
Since
\begin{equation}
 \sum_{r=p}^{\ell}q_{j_r}=z_{j_p}-z_{j_1}-\alpha,
 \qquad \sum_{r=1}^{\ell}q_{j_r}=-\alpha,
 \label{eqn-r19-q-telescope}
\end{equation}
we have the following.  If $k\notin L$ and $k<j_\ell$, let $j_p$ be the first selected index
larger than $k$.  Then
\begin{equation}
 Q_k=z_{j_p}-z_{j_1}-\alpha.
 \label{eqn-r19-Q-gap}
\end{equation}
(For $p=1$ this is $-\alpha$.)  Moreover
\[
 Q_k=0\quad(k>j_\ell),\qquad
 Q_{j_\ell}=(\alpha+j_\ell)q_{j_\ell},
\]
whereas for $k=j_p<j_\ell$,
\begin{equation}
 Q_k=(\alpha+k)(z_{j_p}-z_{j_{p+1}})
     +z_{j_{p+1}}-z_{j_1}-\alpha.
 \label{eqn-r19-Q-selected}
\end{equation}

If $k\notin L$ and $k<j_\ell$, \eqref{eqn-r19-Q-gap} can vanish only if
$\eta_{j_p}=\eta_{j_1}+1$ and the two row ranks are equal, which is
impossible.  For $k=j_\ell$, nonvanishing follows from
$q_{j_\ell}\ne0$.

Suppose $k=j_p<j_\ell$.  If
$\eta_{j_p}\ne\eta_{j_{p+1}}$, \eqref{eqn-r19-Q-selected} has nonzero
$\alpha^2$-coefficient.  Otherwise maximality gives
$q_{j_p}=1$, and
\[
 Q_k=k+z_{j_{p+1}}-z_{j_1}.
\]
This can vanish only if
$\eta_{j_{p+1}}=\eta_{j_1}=:a$.  In that case the tie-broken row ranks give
\begin{equation}
 Q_k
 =k-\#\{j_1\le h\le k:\eta_h=a\}
 =(j_1-1)+\#\{j_1\le h\le k:\eta_h\ne a\}.
 \label{eqn-r19-Q-count}
\end{equation}
Hence $Q_k=0$ exactly when $j_1=1$ and
$\eta_1=\cdots=\eta_k=a$.  In that event every selected position through
$k$ is fixed, so the first difference position $i_1$ lies after $k$; the
fixed selected chain gives $\eta_{i_1}=a$, and (c) fails.  Conversely, if
(c) fails, \Cref{lem-r19-maximal-completion} inserts $1,\ldots,k$ as fixed
selected positions.  Hence $k=j_p<j_\ell$,
$\eta_{j_p}=\eta_{j_{p+1}}=\eta_{j_1}$, and
\eqref{eqn-r19-Q-count} vanishes.  Thus
\[
 Q_k\ne0\quad\Longleftrightarrow\quad
 j_\ell\ge k\text{ and (c) holds}.
\]
By \eqref{eqn-r19-completion-tail}, $j_\ell\ge k$ is condition~(b), completing
the proof.

\end{proof}

\section{The jeu de fl\`eches Pieri rule}
\label{sec-r19-recursive}

\subsection{Row ranks and hook values}

Let $L=\{j_1<\cdots<j_\ell\}$ be maximal for $\eta$ and put
$\lambda=c_L(\eta)$.  For $p<\ell$, a source arrow hits row $j_p$ precisely
when
$\eta_{j_p}>\eta_{j_{p+1}}$, at column $\eta_{j_{p+1}}+1$; the last
source arrow hits row $j_\ell$ precisely when
$\eta_{j_\ell}>\eta_{j_1}+1$, at column $\eta_{j_1}+2$.  Dually, a target
arrow hits row $j_p$ for $p>1$ precisely when
$\lambda_{j_{p-1}}<\lambda_{j_p}$, at column
$\lambda_{j_{p-1}}+1$, and it hits row $j_1$ precisely when
$\lambda_{j_\ell}-1<\lambda_{j_1}$, at column $\lambda_{j_\ell}$.
If the two relevant lengths agree, there is no hit.

\begin{lemma}\label{lem-r19-rank-displacement}
Let $L$ be maximal with respect to $\eta$, put $\lambda=c_L(\eta)$, and
let $i\in L$.  Then
\begin{equation}
 \operatorname{rk}_\lambda(i)-\operatorname{rk}_\eta(i)
 =\begin{cases}
   l_\eta(i,\lambda_i+1)+1,&\eta_i>\lambda_i,\\
   1,&\eta_i=\lambda_i,\\
   -l_\lambda(i,\eta_i+1),&\eta_i<\lambda_i.
  \end{cases}
 \label{eqn-r19-rank-displacement}
\end{equation}
\end{lemma}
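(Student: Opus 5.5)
We compare ranks directly from the definition $\operatorname{rk}_\gamma(i)=1+\#\{h<i:\gamma_h\ge\gamma_i\}+\#\{h>i:\gamma_h>\gamma_i\}$, applied to $\gamma=\eta$ and $\gamma=\lambda$ at the fixed position $i\in L$. The plan is to describe the multiset of row lengths of $\lambda$ seen from $i$ in terms of that of $\eta$. By \eqref{eqn-r19-selected-transport}, outside $L$ nothing changes. On $L$, the lengths are cyclically shifted toward smaller indices, and one length goes up by one. Thus $\lambda$ is obtained from $\eta$ by moving the length $\eta_{j_{p+1}}$ from row $j_{p+1}$ to row $j_p$, and the length $\eta_{j_1}$, increased to $\eta_{j_1}+1$, to row $j_\ell$.

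The key input is maximality (M1)--(M3). It says that an unselected row $h$ never has the same length as the selected row whose value passes over it. A row with $j_{p-1}<h<j_p$ is crossed by the value $\eta_{j_p}$, a row with $h<j_1$ by $\eta_{j_1}$, and a row with $h>j_\ell$ by $\eta_{j_1}+1$. Hence the tie-breaking between rows of equal length is never disturbed, and each count changes only through rows whose lengths lie strictly between the old and new value at $i$. I will also use the cyclic structure: seen from a fixed row, the selected lengths other than the one arriving or leaving at $i$ are just relabelled, so their contribution to the count cancels.

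I would then split into the three cases. If $\eta_i=\lambda_i$ (a fixed selected position), the arriving value equals the old one. The only effect is that a row of equal length has moved from after $i$ to before $i$ (or, at $j_\ell$, the length grew by one). In either case one extra row ties from above and counts, giving $+1$. I would check this separately at $i=j_\ell$, where $\eta_{j_\ell}=\eta_{j_1}+1$, and at $i=j_p$ with $p<\ell$, where $\eta_{j_p}=\eta_{j_{p+1}}$.

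If $\eta_i>\lambda_i$, the row shrinks. The rows that newly outrank it are those $h$ with $\lambda_i<\eta_h\le\eta_i$, counted with the tie conventions, together with the selected row now carrying the old length $\eta_i$ (or its cyclic image). This count should match exactly the leg $l_\eta(i,\lambda_i+1)$, whose definition uses the asymmetric conditions $j\le\gamma_h+1\le\gamma_i$ for $h<i$ and $j\le\gamma_h\le\gamma_i$ for $h>i$, plus $1$. The case $\eta_i<\lambda_i$ is symmetric: now we count the rows that $i$ overtakes, which is a leg in $\lambda$ at column $\eta_i+1$, and there is no extra $1$.

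The main obstacle is the boundary bookkeeping in the strict cases. First, the leg's "$+1$" shift for $h<i$ has to match the rank convention ($\ge$ before $i$, $>$ after $i$); I would verify this by splitting $h$ by side and checking the extremal lengths $\lambda_h=\lambda_i$ and $\eta_h=\eta_i$. Second, the selected rows adjacent to $i$ in the cycle, and the wrap-around at $j_\ell$ with its extra box, must be tracked explicitly. I would handle the wrap-around case $i=j_\ell$ separately, using \Cref{lem-r19-Phi-eigenvalues}-style reasoning: the rank of the raised row equals the old rank of row $j_1$.
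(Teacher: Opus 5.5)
Your outline has the paper's overall shape: compare the two defining counts at the fixed row $i$, where unselected rows feel only the change of threshold and selected rows are cyclically shifted. But the two steps that carry the argument are misstated, and the match with the leg is only asserted. First, maximality is not the key input, and the consequence you draw from it is false. For $\operatorname{rk}(i)$ only comparisons with row $i$ matter, whereas (M1)--(M3) compare an unselected row with the value passing over it. In fact \eqref{eqn-r19-rank-displacement} holds for every $L$, and the paper's proof never uses maximality. Ties with row $i$ are not neutralized either: they produce the $+1$ in the equal case and the endpoint terms of the leg. So the claim that each count changes only through rows whose lengths lie strictly between the old and new values fails. Take $\eta=(0,1,3,1,2)$, $L=\{1,2,3,5\}$, $i=5$. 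No integer lies strictly between $\lambda_5=1$ and $\eta_5=2$, yet $\operatorname{rk}_\lambda(5)-\operatorname{rk}_\eta(5)=5-2=3=l_\eta(5,2)+1$, and the leg comes from rows $2$ and $4$, which have length $1=\lambda_5$.

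Second, the claim that ``the selected lengths other than the one arriving or leaving at $i$ are just relabelled, so their contribution cancels'' is not true as stated. The two ranks use different thresholds ($\eta_i$ versus $\lambda_i$). Also, $\eta_{j_1}$ is not merely relabelled: it becomes $\eta_{j_1}+1$, and for $1<p<\ell$ (with $i=j_p$) it moves from the side of $i$ where ties count ($\ge$) to the side where they do not ($>$). The missing idea is to compare at a common threshold $t$. There the unselected rows cancel identically, and the selected rows telescope to $\mathbf 1_{\{\eta_i\ge t\}}-\mathbf 1_{\{\lambda_i>t\}}$, precisely because $\mathbf 1_{\{\eta_{j_1}+1>t\}}=\mathbf 1_{\{\eta_{j_1}\ge t\}}$. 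Take $t=\lambda_i$ if $\eta_i>\lambda_i$ and $t=\eta_i$ otherwise. What remains is a threshold shift inside one composition, which is literally the leg (the shift $\gamma_h+1$ for $h<i$ encodes the $\ge$ convention). All three cases, including $i=j_\ell$, then come out uniformly with no separate wrap-around analysis. If you want maximality to do the work, use it as transport of ranks. \Cref{lem-r19-spectral-coordinate} gives $\operatorname{rk}_\lambda(j_p)=\operatorname{rk}_\eta(j_{p+1})$ for $p<\ell$ and $\operatorname{rk}_\lambda(j_\ell)=\operatorname{rk}_\eta(j_1)$. This reduces the difference to comparing two rows of a single composition in the tie-broken order, with (M1)--(M3) excluding exactly the boundary rows that would spoil the match with the leg. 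That argument is then needed in every case, not only at $i=j_\ell$.
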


\begin{proof}
Write $i=j_p$, put
$x_r=\eta_{j_r}$, $y_r=\lambda_{j_r}$, and set
$a=x_p=\eta_i$, $b=y_p=\lambda_i$.  Thus
$y_r=x_{r+1}$ for $r<\ell$ and $y_\ell=x_1+1$.  For every integer $t$,
\begin{equation}
 \sum_{r<p}\bigl(\mathbf1_{\{y_r\ge t\}}-
                         \mathbf1_{\{x_r\ge t\}}\bigr)
 +\sum_{r>p}\bigl(\mathbf1_{\{y_r>t\}}-
                         \mathbf1_{\{x_r>t\}}\bigr)
 =\mathbf1_{\{a\ge t\}}-\mathbf1_{\{b>t\}}.
 \label{eqn-r19-rank-cyclic-cancellation}
\end{equation}
The sums telescope, since
$\mathbf1_{\{x_1+1>t\}}=\mathbf1_{\{x_1\ge t\}}$.

Now
\[
 \operatorname{rk}_\lambda(i)-\operatorname{rk}_\eta(i)
 =\sum_{h<i}\bigl(\mathbf1_{\{\lambda_h\ge b\}}-
                         \mathbf1_{\{\eta_h\ge a\}}\bigr)
 +\sum_{h>i}\bigl(\mathbf1_{\{\lambda_h>b\}}-
                         \mathbf1_{\{\eta_h>a\}}\bigr).
\]

At a common threshold the unselected contributions vanish because
$\lambda_h=\eta_h$, while the selected contributions are
\eqref{eqn-r19-rank-cyclic-cancellation}.
If $a>b$, the uncancelled terms are
\[
 1+\#\{h<i:b\le\eta_h\le a-1\}
   +\#\{h>i:b+1\le\eta_h\le a\}
 =l_\eta(i,b+1)+1.
\]
Here \eqref{eqn-r19-rank-cyclic-cancellation} is used with $t=b$;
the remaining terms change the source threshold from $b$ to $a$.
If $a=b$, put $t=a$ in
\eqref{eqn-r19-rank-cyclic-cancellation}; the source and target thresholds
coincide and its right side is $1$, so the rank difference is $1$.  If
$a<b$, use \eqref{eqn-r19-rank-cyclic-cancellation} with $t=a$ and change
the target threshold from $a$ to $b$;
the result is
\[
 -\#\{h<i:a\le\lambda_h\le b-1\}
 -\#\{h>i:a+1\le\lambda_h\le b\}
 =-l_\lambda(i,a+1).
\]
This proves \eqref{eqn-r19-rank-displacement}.
\end{proof}

\begin{lemma}\label{lem-r19-hook-values}
Let $\eta\in\mathbb N^n$, let $L$ be maximal with respect to $\eta$, and
put $\lambda=c_L(\eta)$.  For $i\in L$,
\[
 q_i=
 \begin{cases}
  d_\eta(i,\lambda_i+1),&\eta_i>\lambda_i,\\
  1,&\eta_i=\lambda_i,\\
  -d'_\lambda(i,\eta_i+1),&\eta_i<\lambda_i.
 \end{cases}
\]
Moreover,
\begin{equation}
 b_{\eta\lambda}^{(i)}=q_i b_{\eta\lambda}\qquad(i\in L).
 \label{eqn-r19-shielded-product}
\end{equation}
\end{lemma}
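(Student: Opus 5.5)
The plan is to prove the hook formula for $q_i$ first, by comparing eigenvalues and using \Cref{lem-r19-rank-displacement}. The shielded identity \eqref{eqn-r19-shielded-product} should then follow by checking that, in each selected row, the entry which the arrows replace is precisely $q_i$ up to the replacement value $\pm1$.

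For the first part, fix $i\in L$. By \eqref{eqn-r19-eigenvalue-rank} and \eqref{eqn-r19-q},
\[
 q_i=\overline\eta_i-\overline\lambda_i
 =\alpha(\eta_i-\lambda_i)+\operatorname{rk}_\lambda(i)-\operatorname{rk}_\eta(i).
\]
I substitute \eqref{eqn-r19-rank-displacement} and treat the three cases.

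If $\eta_i>\lambda_i$, then $q_i=\alpha(\eta_i-\lambda_i)+l_\eta(i,\lambda_i+1)+1$. The box $(i,\lambda_i+1)$ lies in $\eta$ and has arm $\eta_i-\lambda_i-1$. Hence this expression is exactly $d_\eta(i,\lambda_i+1)$.

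If $\eta_i=\lambda_i$, the displacement is $1$, so $q_i=1$.

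If $\eta_i<\lambda_i$, then $q_i=-\alpha(\lambda_i-\eta_i)-l_\lambda(i,\eta_i+1)$. The box $(i,\eta_i+1)$ lies in $\lambda$ and has arm $\lambda_i-\eta_i-1$. Its lower hook is therefore $\alpha(\lambda_i-\eta_i)+l_\lambda(i,\eta_i+1)$, which gives $q_i=-d'_\lambda(i,\eta_i+1)$.

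For \eqref{eqn-r19-shielded-product}, I use the hit description at the start of this subsection, rewritten in terms of $\eta_i$ and $\lambda_i$ through $\lambda_{j_p}=\eta_{j_{p+1}}$ and $\lambda_{j_\ell}=\eta_{j_1}+1$.

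A source arrow hits $i=j_p$ exactly when $\eta_i>\lambda_i$, at column $\lambda_i+1$. For $p=\ell$ this is the condition $\eta_{j_\ell}>\eta_{j_1}+1$ at column $\eta_{j_1}+2$. A selected source row carries upper hooks, so the entry replaced by $1$ is $d_\eta(i,\lambda_i+1)$.

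A target arrow hits $i=j_p$ exactly when $\lambda_{j_{p-1}}=\eta_i<\lambda_i$, at column $\eta_i+1$. For $p=1$ the condition reads $\lambda_{j_\ell}-1=\eta_{j_1}<\lambda_{j_1}$, at column $\eta_{j_1}+1$. A selected target row carries lower hooks, so the entry replaced by $-1$ is $d'_\lambda(i,\eta_i+1)$.

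These three conditions are mutually exclusive. Hence each selected row $i$ suffers at most one replacement, and the arrows launched for the other selected rows never touch row $i$.

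Shielding row $i$ undoes only the replacement in row $i$. It multiplies $b_{\eta\lambda}$ by $d_\eta(i,\lambda_i+1)/1$ in the first case, by $d'_\lambda(i,\eta_i+1)/(-1)$ in the third case, and by $1$ when $\eta_i=\lambda_i$. Comparing with the formula just proved, each factor equals $q_i$. This gives $b^{(i)}_{\eta\lambda}=q_ib_{\eta\lambda}$.

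The main obstacle is the bookkeeping at the wrap-around indices $p=1$ and $p=\ell$. There the first source arrow points northeast and the last target arrow southwest, and the shift by one in $\lambda_{j_\ell}=\eta_{j_1}+1$ must produce exactly column $\eta_{j_1}+2$ in the source and column $\lambda_{j_\ell}$ in the target. I would verify these two boundary cases explicitly against the arrow conventions. The rest is the direct arm computation above.
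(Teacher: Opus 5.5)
Your proposal is correct and follows essentially the paper's own argument. You write $q_i=\alpha(\eta_i-\lambda_i)+\operatorname{rk}_\lambda(i)-\operatorname{rk}_\eta(i)$, read off the three hook values from \Cref{lem-r19-rank-displacement} using the arms of $(i,\lambda_i+1)$ and $(i,\eta_i+1)$, and then observe that these are exactly the entries the arrows replace by $\pm1$, so shielding row $i$ multiplies the product by $q_i$. Your translation of the hit conditions, including the wrap-around rows $j_1$ and $j_\ell$, into $\eta_i>\lambda_i$ or $\eta_i<\lambda_i$, together with the check that each selected row receives at most one replacement, just spells out the step the paper compresses into ``these are the entries struck by the arrows.''
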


\begin{proof}
By \eqref{eqn-r19-eigenvalue-rank},
\[
 q_i=\alpha(\eta_i-\lambda_i)
     +\operatorname{rk}_\lambda(i)-\operatorname{rk}_\eta(i).
\]
If $\eta_i>\lambda_i$, the arm of $(i,\lambda_i+1)$ in the source is
$\eta_i-\lambda_i-1$; the first case of
\Cref{lem-r19-rank-displacement} therefore gives
$q_i=d_\eta(i,\lambda_i+1)$.  The equal case gives $q_i=1$.  If
$\eta_i<\lambda_i$, the target box $(i,\eta_i+1)$ has arm
$\lambda_i-\eta_i-1$, and the third case gives
$q_i=-d'_\lambda(i,\eta_i+1)$.

These are the entries struck by the arrows.  Shielding row $i$ therefore
changes the ratio of shielded to unshielded products by
\[
 \frac{b_{\eta\lambda}^{(i)}}{b_{\eta\lambda}}
 =
 \begin{cases}
  q_i/1,&\eta_i>\lambda_i,\\
  1,&\eta_i=\lambda_i,\\
  (-q_i)/(-1),&\eta_i<\lambda_i,
 \end{cases}
 =q_i,
\]
which is \eqref{eqn-r19-shielded-product}.
\end{proof}

\subsection{Row hook products}

Put
\[
 U_i(\gamma)=\prod_{j=1}^{\gamma_i}d_\gamma(i,j),\qquad
 V_i(\gamma)=\prod_{j=1}^{\gamma_i}d'_\gamma(i,j).
\]

\begin{lemma}\label{lem-r19-hook-interchange}
Let $\gamma\in\mathbb N^n$ and $1\le r<n$.  If
$\gamma_r\ne\gamma_{r+1}$ and
$\Delta=\overline\gamma_r-\overline\gamma_{r+1}$, then
\[
 \frac{V_r(\gamma)U_{r+1}(\gamma)}
 {U_r(s_r\gamma)V_{r+1}(s_r\gamma)}
 =\frac{\Delta-1}{\Delta}.
\]
\end{lemma}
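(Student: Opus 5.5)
The plan is a direct box-by-box comparison. Put $\nu=s_r\gamma$, $a=\gamma_r$, $b=\gamma_{r+1}$, so $a\ne b$. Match box $(r,j)$ of $\gamma$ with box $(r+1,j)$ of $\nu$, and box $(r+1,j)$ of $\gamma$ with box $(r,j)$ of $\nu$. Matched boxes lie in rows of equal length, so they have equal arms. For the legs, every row $h\ne r,r+1$ keeps its length and its side relative to the box in question. Hence the only possible leg discrepancy comes from the partner row. Its contribution switches between the "$h<i$" test $j\le\gamma_h+1\le\gamma_i$ and the "$h>i$" test $j\le\gamma_h\le\gamma_i$.

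\emph{Case $a<b$.} For the short row (row $r$ of $\gamma$, row $r+1$ of $\nu$), both partner tests require $b\le a$ or $b+1\le a$, so both fail. The legs agree, and therefore $V_r(\gamma)=V_{r+1}(\nu)$. For the long row, the $\gamma$-test is $j\le a+1$ and the $\nu$-test is $j\le a$. The legs thus agree except at column $j=a+1$, where the leg in $\gamma$ is larger by one. Writing $D=d_\gamma(r+1,a+1)$, we get
\[
 \frac{U_{r+1}(\gamma)}{U_r(\nu)}=\frac{D}{D-1}.
\]
It remains to show $D=1-\Delta$. By \eqref{eqn-r19-eigenvalue-rank},
\[
 \Delta=\alpha(a-b)-\bigl(\operatorname{rk}_\gamma(r)-\operatorname{rk}_\gamma(r+1)\bigr).
\]
The rank difference should equal $l_\gamma(r+1,a+1)+1$. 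To see this, count the rows $h\ne r,r+1$ that lie strictly between the two rows in the tie-broken order, i.e.\ those with length in the window $[a+1,b]$ or $[a,b-1]$ according as $h$ lies below or above. Then add $1$ for row $r+1$ itself, in the same spirit as \Cref{lem-r19-rank-displacement}. This gives $\Delta=-(D-1)$, so $D/(D-1)=(\Delta-1)/\Delta$.

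\emph{Case $a>b$.} Now the long row is row $r$ of $\gamma$ and the short row is row $r+1$ of $\gamma$. For the short row, both tests fail as before, so $U_{r+1}(\gamma)=U_r(\nu)$. For the long row, the $\gamma$-test is $j\le b$ (partner below) and the $\nu$-test is $j\le b+1$. The legs therefore differ only at column $b+1$, where the leg in $\nu$ is larger by one. Writing $D'=d'_\gamma(r,b+1)$, we get $V_r(\gamma)/V_{r+1}(\nu)=D'/(D'+1)$. The same rank count, now with row $r$ ahead of row $r+1$, gives $\Delta=\alpha(a-b)+l_\gamma(r,b+1)$. This equals $D'$, and so $D'/(D'+1)$ fails to be $(\Delta-1)/\Delta$ at first sight. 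I therefore expect the index bookkeeping here to force $D'=\Delta-1$, i.e.\ $\Delta=\alpha(a-b)+l_\gamma(r,b+1)+1$, with the $+1$ coming from row $r+1$ itself in the rank difference.

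The main obstacle is this rank-versus-leg identification in each case. I would verify it with a small example, such as $\gamma=(2,0)$, before finalizing: there $\Delta=2\alpha+1$ and $d'_\gamma(1,1)=2\alpha$, which is consistent with $D'=\Delta-1$. The remaining work is simply matching the half-open windows in the definition of $l_\gamma$ with the tie-breaking in $\operatorname{rk}$.
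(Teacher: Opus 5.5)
Your route is the paper's: pair the rows, check that the shorter row keeps all its hooks and that in the longer row only column $\min(a,b)+1$ changes, then identify that hook with $\Delta$ through the rank formula. The pairing part is correct. The gap is in the one step that carries the content, the rank-versus-leg identification. It is off by one in both cases, and in the second case it is only conjectured.

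Put $B=\#\{h<r:\min(a,b)\le\gamma_h<\max(a,b)\}+\#\{h>r+1:\min(a,b)<\gamma_h\le\max(a,b)\}$. Your own count shows that in both cases $|\operatorname{rk}_\gamma(r)-\operatorname{rk}_\gamma(r+1)|=B+1$, with the extra unit coming from the partner row. But when $a<b$ the leg $l_\gamma(r+1,a+1)$ is already $B+1$. The partner row $r$ lies above and satisfies $a+1\le\gamma_r+1\le b$, which is exactly the extra leg unit you found when comparing $\gamma$ with $s_r\gamma$. So the rank difference is $l_\gamma(r+1,a+1)$, not $l_\gamma(r+1,a+1)+1$; for $\gamma=(0,1)$ the rank difference and $l_\gamma(2,1)$ are both $1$. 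Hence $\Delta=\alpha(a-b)-l_\gamma(r+1,a+1)=1-D$. The identity you actually stated would give $\Delta=-D$, so your conclusion $\Delta=-(D-1)$, though true, does not follow from what precedes it.

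When $a>b$ the slip goes the other way. The partner row $r+1$ lies below with length $b<b+1$, so it is \emph{not} counted, and $l_\gamma(r,b+1)=B$. Therefore $\Delta=\alpha(a-b)+l_\gamma(r,b+1)+1=D'+1$. This is the value you guessed, but it needs this argument, not a check on $\gamma=(2,0)$.

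Once both identifications are in place, your ratios $D/(D-1)$ and $D'/(D'+1)$ both equal $(\Delta-1)/\Delta$ and the proof closes. Equivalently, these are the paper's hook values: $d_\gamma(r+1,a+1)=1-\Delta$ and $d_{s_r\gamma}(r,a+1)=-\Delta$ when $a<b$, and $d'_\gamma(r,b+1)=\Delta-1$ and $d'_{s_r\gamma}(r+1,b+1)=\Delta$ when $a>b$.
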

\begin{proof}
Put $a=\gamma_r$ and $b=\gamma_{r+1}$.  The adjacent rows remain on the
same side of every other row, so hooks outside them do not change.  The
longer row contributes to no leg in the shorter row, before or after the
interchange.  Thus the shorter row is unchanged, while in the longer row
only the box in column $\min(a,b)+1$ changes.

If $a<b$, a direct subtraction of the two adjacent row ranks in
\eqref{eqn-r19-eigenvalue-rank} gives
\[
 d_\gamma(r+1,a+1)=1-\Delta,\qquad
 d_{s_r\gamma}(r,a+1)=-\Delta.
\]
The lower hooks of the shorter row agree, so the stated quotient is
$(1-\Delta)/(-\Delta)$.

If $a>b$, subtracting the adjacent row ranks with threshold $b+1$
instead gives
\[
 d'_\gamma(r,b+1)=\Delta-1,\qquad
 d'_{s_r\gamma}(r+1,b+1)=\Delta.
\]
Now the upper hooks of the shorter row agree, and the quotient is again
$(\Delta-1)/\Delta$.
\end{proof}

\subsection{The coefficient of the filling}

\begin{proposition}
\label{prop-r19-filled-eone}
Let $\eta\in\mathbb N^n$, let $L$ be maximal with respect to $\eta$, put
$\lambda=c_L(\eta)$.  Then
\[
 [e_1F_\eta]_\lambda
 =-\alpha\frac{b_{\eta\lambda}}{d_\lambda d'_\lambda}.
\]
\end{proposition}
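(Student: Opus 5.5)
By \eqref{eqn-r19-coordinate-common}, $[e_1F_\eta]_\lambda=-\alpha[x_iF_\eta]_\lambda/q_i$ for any $i\in L$. By \eqref{eqn-r19-shielded-product}, $b_{\eta\lambda}=b^{(i)}_{\eta\lambda}/q_i$. The statement is therefore equivalent to the single identity
\[
 [x_iF_\eta]_\lambda=\frac{b^{(i)}_{\eta\lambda}}{d_\lambda d'_\lambda}
 \qquad\text{for one (hence every) } i\in L .
\]
I would prove this for the most convenient choice. The natural one is $i=j_1$: then the word \eqref{eqn-r19-xi-word} has only the block $[1,j_1]$ before $\Phi$, and after $\Phi$ all the blocks $[j_\ell,n]$ and $[j_{p-1},j_p-1]$. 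Using the single surviving $L$-term from \Cref{lem-r19-coordinate-expansion}, $[x_{j_1}F_\eta]_\lambda$ is an explicit product of factors. Each fixed step contributes $1/\delta$, each interchange contributes $1-1/\delta=(\delta-1)/\delta$, and the raising step contributes $1/\rho(\gamma)$ by \Cref{prop-r19-raising} read backwards, i.e.\ $\Phi F_\gamma=\rho(\gamma)^{-1}F_{\Phi\gamma}$. Here $\gamma$ is the intermediate composition at the moment of raising.

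Next I would convert every factor into hooks. For an interchange of unequal rows, \Cref{lem-r19-hook-interchange} writes $(\Delta-1)/\Delta$ as a ratio of row-hook products $V_rU_{r+1}$ before versus $U_rV_{r+1}$ after. This identity says exactly that, as the selected row passes the unselected row, the unselected row keeps its "upper in source, lower in target" assignment reversed relative to the selected one, as the jeu de flèches prescribes. For the raising step, $\rho(\gamma)=d_{\Phi\gamma}/d_\gamma$ by Sahi and $\rho'(\gamma)=d'_{\Phi\gamma}/d'_\gamma$ by \Cref{lem-r19-lower-hook-raising}, and the difference $\rho-\rho'=1$ accounts for the switch from upper to lower hooks on the selected row being raised. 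The fixed steps among equal selected rows have $\delta=1$ and contribute nothing. The remaining fixed steps occur where a block ends at a selected row. By the transport \eqref{eqn-r19-selected-transport} and the eigenvalue bookkeeping of \Cref{lem-r19-spectral-coordinate}, their denominators $\delta$ equal the differences $\overline\eta_{j_p}-\overline\eta_{j_{p+1}}=q_{j_p}$, and similarly for the last one. By \Cref{lem-r19-hook-values}, these are exactly the hooks struck by the arrows: $d_\eta(i,\lambda_i+1)$, $1$, or $-d'_\lambda(i,\eta_i+1)$.

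The telescoping then runs as follows. Starting from $F_\eta=d_\eta E_\eta$ and ending with the normalization $d_\lambda d'_\lambda$, I would track the product $\prod_{\text{sel}}U\cdot\prod_{\text{unsel}}V$ for the source and its reverse for the target. Each interchange trades one row's hook product along the path, the raising step moves the first row to the bottom while adding one box, and what survives is the row-by-row product defining $b_{\eta\lambda}$ with $q$'s in the struck positions. The one struck entry that is not replaced, in row $j_1$, is the shield. This is consistent with obtaining $b^{(j_1)}_{\eta\lambda}$ rather than $b_{\eta\lambda}$.

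The main obstacle is this bookkeeping. I must verify that the fixed-step denominators produce precisely the replacements ($1$ in source rows, $-1$ in target rows, with the sign coming from $-d'$). I must also check that the hook of the selected row at each interchange is affected only in column $\min+1$, which is what makes the row products line up rather than leaving stray hooks in intermediate compositions. I would first check the identity on the example $\eta=(0,1,3,1,2)$ before writing the general induction on the blocks.
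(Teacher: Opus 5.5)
Your proposal is correct and is essentially the paper's proof. The paper also reduces to the single surviving $L$-term and tracks the mixed product $\prod_{\sigma_r\in L}U_r(\gamma)\prod_{\sigma_r\notin L}V_r(\gamma)$, matching each interchange coefficient to a ratio of these products via \Cref{lem-r19-hook-interchange} and the raising coefficient via $d_{\Phi\gamma}/d_\gamma=\rho(\gamma)$. It collects the fixed coefficients as $q_{j_p}^{-1}$. The only difference is that the paper uses $i=j_\ell$ rather than your $i=j_1$, which is harmless: for every $i\in L$, each interchange has an unselected label on the left and a selected one on the right.

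One correction: $\rho'$ and the identity $\rho-\rho'=1$ play no role at the raising step. The raised row carries the selected label $j_1$, so only its upper hook $\rho(\gamma)$ enters. The switch to lower hooks in the selected target rows comes from dividing $M_L(\lambda,\sigma_{\rm fin})$ into $d_\lambda d'_\lambda$ at the end.
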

\begin{proof}
Write $L=\{j_1<\cdots<j_\ell\}$ and use the unique nonzero $L$-term in
$x_{j_\ell}F_\eta$.  For a state of this term, let $\gamma$ be the current
composition and $\sigma_r$ the source label currently in position $r$, and set
\begin{equation}
 M_L(\gamma,\sigma)
 =
 \prod_{\substack{1\le r\le n\\ \sigma_r\in L}}U_r(\gamma)
 \prod_{\substack{1\le r\le n\\ \sigma_r\notin L}}V_r(\gamma).
 \label{eqn-r19-mixed-row-product}
\end{equation}
Initially $\sigma=\mathrm{id}$.  If $\sigma_{\rm fin}$ is the endpoint tuple,
\eqref{eqn-r19-selected-transport} gives
$\sigma_{{\rm fin},r}\in L\iff r\in L$.  Hence
\[
 M_L(\eta,\mathrm{id})
 =\prod_{j\in L}U_j(\eta)\prod_{j\notin L}V_j(\eta),\qquad
 M_L(\lambda,\sigma_{\rm fin})
 =\prod_{j\in L}U_j(\lambda)\prod_{j\notin L}V_j(\lambda).
\]

For $i=j_\ell$, every interchange in the proof of
\Cref{lem-r19-coordinate-expansion} has an unselected source label on the
left and a selected one on the right.  If it occurs at $s_r$ with current
composition $\gamma$ and
$\Delta=\overline\gamma_r-\overline\gamma_{r+1}$, then
\Cref{lem-r19-hook-interchange} gives
\[
 \frac{M_L(\gamma,\sigma)}
      {M_L(s_r\gamma,s_r\sigma)}
 =
 \frac{V_r(\gamma)U_{r+1}(\gamma)}
      {U_r(s_r\gamma)V_{r+1}(s_r\gamma)}
 =
 \frac{\Delta-1}{\Delta},
\]
which is its interchange coefficient.

Immediately before $\Phi$,
the source label in position $1$ is $j_1\in L$.  The box correspondence in
\Cref{lem-r19-lower-hook-raising} preserves every old upper and lower hook,
and the single new box in that selected row has upper hook $\rho(\gamma)$.
Thus
\[
 \frac{M_L(\gamma,\sigma)}
      {M_L(\Phi\gamma,\Phi\sigma)}
 =\frac1{\rho(\gamma)},
\]
the raising coefficient.

At the fixed reflection indexed by source index $j_p$ $(p<\ell)$, if it
acts at positions $r,r+1$, then
\[
 (\sigma_r,\sigma_{r+1})=(j_p,j_{p+1}).
\]
The preceding unequal interchanges carry the eigenvalues
$\overline\eta_{j_p}$ and $\overline\eta_{j_{p+1}}$ to these positions, so
the fixed coefficient is $q_{j_p}^{-1}$ by \eqref{eqn-r19-q}.  Multiplying
the successive ratios of $M_L$ cancels the intermediate values and gives
\begin{equation}
 [x_{j_\ell}F_\eta]_\lambda
 =
 \frac{M_L(\eta,\mathrm{id})}{M_L(\lambda,\sigma_{\rm fin})}
 \prod_{p<\ell}q_{j_p}^{-1}.
 \label{eqn-r19-canonical-coordinate}
\end{equation}

Before the arrows are launched, the filling product is
\[
 \prod_{j\in L}U_j(\eta)V_j(\lambda)
 \prod_{j\notin L}V_j(\eta)U_j(\lambda).
\]
Dividing by
$d_\lambda d'_\lambda=\prod_jU_j(\lambda)V_j(\lambda)$ gives
\[
 \frac{\displaystyle
   \prod_{j\in L}U_j(\eta)V_j(\lambda)
   \prod_{j\notin L}V_j(\eta)U_j(\lambda)}
  {d_\lambda d'_\lambda}
 =
 \frac{M_L(\eta,\mathrm{id})}{M_L(\lambda,\sigma_{\rm fin})}.
\]
Each selected arrow contributes $q_j^{-1}$ by
\Cref{lem-r19-hook-values}; hence
\begin{equation}
 \frac{b_{\eta\lambda}}{d_\lambda d'_\lambda}
 =
 \frac{M_L(\eta,\mathrm{id})}{M_L(\lambda,\sigma_{\rm fin})}
 \prod_{j\in L}q_j^{-1}.
 \label{eqn-r19-unshielded-hook-telescope}
\end{equation}
Comparison with \eqref{eqn-r19-canonical-coordinate} gives
\[
 [x_{j_\ell}F_\eta]_\lambda
 =q_{j_\ell}\frac{b_{\eta\lambda}}{d_\lambda d'_\lambda}.
\]
Together with
$\alpha[x_{j_\ell}F_\eta]_\lambda
=-q_{j_\ell}[e_1F_\eta]_\lambda$ and $q_{j_\ell}\ne0$, this proves the
proposition.
\end{proof}

\subsection{Multiplication by a variable and the Pieri rule}

\begin{theorem}\label{thm-r19-coordinate-product}
For $\eta\in\mathbb N^n$ and $1\le i\le n$,
\begin{equation}
x_iF_\eta
=\sum_{\substack{L\text{ maximal for }\eta\\ i\in L}}
 b_{\eta,c_L(\eta)}^{(i)}F_{c_L(\eta)}^*.
 \label{eqn-r19-coordinate-product}
\end{equation}
\end{theorem}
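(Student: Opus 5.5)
The proof combines the support statement with the coefficient computations already in place. By \Cref{lem-r19-coordinate-expansion}, the $F$-support of $x_iF_\eta$ is exactly $\{c_L(\eta): L \text{ maximal},\ i\in L\}$, and these targets are distinct. So it suffices to fix a maximal $L\ni i$, put $\lambda=c_L(\eta)$, and show that
\[
 [x_iF_\eta]_\lambda=\frac{b^{(i)}_{\eta\lambda}}{d_\lambda d'_\lambda}.
\]
By \eqref{eqn-r19-dual}, this is the same as saying that the $F^*_\lambda$-coefficient equals $b^{(i)}_{\eta\lambda}$. Targets $c_L(\eta)$ with $i\notin L$ need no separate treatment: they do not occur in the support, and they do not appear in the sum on the right of \eqref{eqn-r19-coordinate-product}.

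For the coefficient itself, start from \eqref{eqn-r19-coordinate-spectral-coefficient}, i.e.\ the case $a_i=1$, $a_r=0$ ($r\ne i$) of \Cref{lem-r19-eone-commutator}:
\[
 \alpha[x_iF_\eta]_\lambda=-q_i[e_1F_\eta]_\lambda .
\]
Substituting \Cref{prop-r19-filled-eone}, namely $[e_1F_\eta]_\lambda=-\alpha b_{\eta\lambda}/(d_\lambda d'_\lambda)$, gives
\[
 [x_iF_\eta]_\lambda=q_i\frac{b_{\eta\lambda}}{d_\lambda d'_\lambda}.
\]
The shielding identity \eqref{eqn-r19-shielded-product} of \Cref{lem-r19-hook-values} states $q_ib_{\eta\lambda}=b^{(i)}_{\eta\lambda}$ for $i\in L$. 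Combining the two displays yields the claimed coefficient, and summing over all maximal $L\ni i$ gives \eqref{eqn-r19-coordinate-product}.

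All the substantive work sits in the cited results: the hook telescoping in \Cref{prop-r19-filled-eone} and the identification of $q_i$ with the arrow-struck entries in \Cref{lem-r19-hook-values}. The only point to check here is that the proposition's normalization, which was derived from the canonical index $j_\ell$, is independent of $i$. This holds because \Cref{prop-r19-filled-eone} is a statement about $[e_1F_\eta]_\lambda$, which does not involve $i$ at all, and the $i$-dependence enters only through the factor $q_i$, in agreement with \eqref{eqn-r19-coordinate-common}.
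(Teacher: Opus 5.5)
Your proposal is correct and matches the paper's proof: support from \Cref{lem-r19-coordinate-expansion}, then $\alpha[x_iF_\eta]_\lambda=-q_i[e_1F_\eta]_\lambda$ combined with \Cref{prop-r19-filled-eone} and the shielding identity $b^{(i)}_{\eta\lambda}=q_ib_{\eta\lambda}$, and finally conversion to the dual basis via $F_\lambda=(d_\lambda d'_\lambda)F^*_\lambda$. Your remark that the $i$-dependence enters only through $q_i$ is a correct and harmless addition.
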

\begin{proof}
By \Cref{lem-r19-coordinate-expansion}, the targets are the
$\lambda=c_L(\eta)$ with $L$ maximal and $i\in L$.  For each target,
\Cref{prop-r19-filled-eone} and
\eqref{eqn-r19-coordinate-spectral-coefficient} give
\[
 [x_iF_\eta]_\lambda
 =q_i\frac{b_{\eta\lambda}}{d_\lambda d'_\lambda}
 =\frac{b_{\eta\lambda}^{(i)}}{d_\lambda d'_\lambda},
\]
by \eqref{eqn-r19-shielded-product}.  Since
$F_\lambda=(d_\lambda d'_\lambda)F_\lambda^*$, the dual coefficient is
$b_{\eta\lambda}^{(i)}$, proving \eqref{eqn-r19-coordinate-product}.
\end{proof}

\begin{proof}[Proof of \Cref{thm-r19-pieri}]
Fix a maximal $L$ and put $\lambda=c_L(\eta)$.  With $Q_k$ as in
\eqref{eqn-r19-Qk},
\eqref{eqn-r19-linear-spectral-coefficient} and
\Cref{prop-r19-filled-eone} give
\[
 \alpha[F_{\varepsilon_k}F_\eta]_\lambda
 =-Q_k[e_1F_\eta]_\lambda
 =\alpha Q_k\frac{b_{\eta\lambda}}{d_\lambda d'_\lambda},
 \qquad
 g_{k\eta}^\lambda=Q_kb_{\eta\lambda}.
\]
Using \eqref{eqn-r19-shielded-product} and $q_i=0$ off $L$, the right side is
\[
 \mathbf{1}_{\{k\in L\}}(\alpha+k)b_{\eta,c_L(\eta)}^{(k)}
 +\sum_{\substack{i\in L\\i>k}}b_{\eta,c_L(\eta)}^{(i)}.
\]
The support theorem and \Cref{lem-r19-maximal-completion} account for all
targets, so \eqref{eqn-r19-uniform-coefficient} follows.
\end{proof}

\section{Acknowledgments}
I thank Siddhartha Sahi for helpful discussions and advice, and Dan Marshall
for clarifying an issue in an early version of his paper and pointing me to
the revised version.

\bibliographystyle{abbrv}
\bibliography{waldecks}

\vspace{0.1in}
\noindent\textsc{Universidade Federal de S\~ao Carlos}
\newline Departamento de Matem\'atica
\newline S\~ao Carlos, SP - Brazil.

\end{document}